\theoremstyle{plain}
\newtheorem{theorem}{Theorem}[section]
\newtheorem{proposition}[theorem]{Proposition}
\newtheorem{lemma}[theorem]{Lemma}
\theoremstyle{definition}
\newtheorem{definition}[theorem]{Definition}
\newtheorem{example}[theorem]{Example}
\newtheorem{remark}[theorem]{Remark}
\theoremstyle{remark}
\newcommand{\enne}{\mathbb{N}}
\newcommand{\elle}[1]{L^{#1}(\Omega)}
\newcommand{\re}{\mathbb{R}} 
\newcommand{\R}{\mathbb{R}}
\def\al{\alpha} 
\def\rn{\mathbb{R}^{N}} 
\def\D{\nabla} 
\def\vp{\varphi} 
\def\rife#1{(\ref{#1})} 
\def\eps{\varepsilon}
\def\dys{\displaystyle}
\def\Om{\Omega}
\def\bc{\begin{cases}} 
\def\ec{\end{cases}} 
\def\be{\begin{equation}} 
\def\ee{\end{equation}} 
\def\ga{\gamma}
\def\ga{\gamma}
\def\la{\lambda}
\def\t1pn{\mathcal T^{1,p}_{\rm loc} ((0,T)\times\rn)}
\def\vare{\varepsilon}
\keywords{Nonlinear parabolic equations, Singular parabolic equations, Degenerate parabolic equations}
\subjclass[2010]{35K55, 35K65, 35K67}
\title[Degenerate parabolic equations with singular terms]{Existence of solutions for degenerate parabolic equations with singular terms}
\author[A. Dall'Aglio]{Andrea Dall'Aglio}\address{Andrea Dall'Aglio, Dipartimento di Matematica ``G. Castelnuovo'',
 ``Sapienza" Universit\`a di Roma, Piazzale Aldo Moro 5, 00185 Roma, Italy.}\email{dallaglio@mat.uniroma1.it}
\author[L. Orsina]{Luigi Orsina}\address{Luigi Orsina, Dipartimento di Matematica ``G. Castelnuovo'',
 ``Sapienza" Universit\`a di Roma, Piazzale Aldo Moro 5, 00185 Roma, Italy.}\email{orsina@mat.uniroma1.it}
\author[F. Petitta]{Francesco Petitta}\address{Francesco Petitta, Dipartimento di Scienze di Base e Applicate
per l' Ingegneria,
 ``Sa\-pien\-za" Universit\`a di Roma, Via Scarpa 16, 00161 Roma, Italy.}\email{francesco.petitta@sbai.uniroma1.it}
\begin{document}

\begin{abstract}
In this paper we deal with parabolic problems whose simplest model is 
$$
\left\{
\begin{array}{cl}
u'- \Delta_{p} u + B\frac{|\D u|^p}{u} = 0 \quad &\mbox{in $(0,T) \times \Omega$,} \\[1.5 ex]
u(0,x)= u_0 (x) &\mbox{in $\Omega$,} \\[1.5 ex]
u(t,x)=0 &\mbox{on $(0,T) \times \partial\Omega$,}\quad 
\end{array}\right.
$$
where $T>0$, $N\geq 2$, $p>1$, $B > 0$, and $u_{0}$ is a positive function in $L^{\infty}(\Omega)$ bounded away from zero.
\end{abstract}

\maketitle



\section{Introduction}
In this paper we are concerned with homogeneous nonlinear singular initial-boundary value problems whose model is 
$$
\dys u'- \Delta_{p} u + B\frac{|\D u|^p}{u} = 0\,,
$$
where $\Omega$ is a bounded open set of $\rn$, $N\geq 2$, $p>1$, $B\in \re^{+}$ and $\Delta_{p} u={\rm div} (|\nabla u|^{p-2}\nabla u)$ is the usual $p$-laplacian. Here and below we use the simplified notation $u'$ in order to indicate the time derivative of $u$ with respect to $t$.

Singular problems of this type have been largely studied in the past also for their connection with the theory of non-Newtonian fluids and heat conduction in electrically active materials (see for instance \cite{nc,kc} and references therein). 

From the mathematical point of view, the non-homogeneous elliptic case has been considered in a series of papers in the last few years. Consider the equation 
\begin{equation}\label{a6}
- \Delta u + B\frac{|\D u|^2}{u^{\gamma}} = f\,,
\end{equation}
equipped with homogeneous Dirichlet boundary conditions. Here $\gamma>0$ and $f$ is a nonnegative (not identically zero) function in $L^{1}(\Omega)$. The problem is obviously singular as we ask the solution to vanish at the boundary of $\Omega$. In \cite{a6} the existence of a finite energy (i.e., in $H^{1}_{0}(\Omega)$) solution to problem \rife{a6} has been proved if $\gamma<2$ and for data $f$ locally bounded away from zero.

The case of a possibly degenerate datum $f$ has been also considered. 
If $\gamma<1$ the existence of a solution in $H^{1}_{0}(\Omega)$ was proved in \cite{b1} for general nonnegative (not identically zero) data, while the case $\gamma=1$ was faced in \cite{m1} provided $B$ was small enough (we also mention \cite{gp, pv, ACM, ABLP} and the references therein for a quite complete account on the subject).
Problems as in \rife{a6} with possibly changing-sign data have also been considered in \cite{gps} in the case $\gamma<1$ (see also \cite{gps2} for further considerations concerning the strongly singular case).

\medskip 
In the evolutive case, problems as 
\begin{equation}\label{evol}
\left\{
\begin{array}{cl}
\dys u'- \Delta_{p} u + B\frac{|\D u|^p}{u^{\gamma}} = f \quad &\mbox{in $(0,T) \times \Omega$,} \\[1.5 ex]
u(0,x)= u_0 (x) &\mbox{in $\Omega$,} \\[1.5 ex]
u(t,x)=0 &\mbox{on $(0,T) \times \partial\Omega$,}\quad 
\end{array}\right.
\end{equation}
have been considered in the case $p=2$ and $\gamma<1$ (see \cite{mape}). If $\gamma=1$ singular problems as \rife{evol} have been considered in \cite{xy,xy1} for smooth strictly positive data, while degenerate problems (i.e. $p>2$) were studied in \cite{zw} in the one dimensional case. 

We also would like to stress that, in the degenerate case $p>2$ and if $B>p-1$, to find a solution for the model problem \rife{evol} is formally equivalent, through a standard Cole-Hopf transformation, to find a {\sl large solution} to the doubly nonlinear problem
$$
w'=\Delta_{p} w^{m}, \ \ \ m<\frac{1}{p-1}, 
$$
that is a solution that blows up at the boundary of $\Omega$ (see \cite{cv, lp, biv, mp} for further considerations on this fact).

The aim of this paper is to study existence and nonexistence of solutions for a general class of singular homogeneous (i.e. $f\equiv 0$) parabolic problems as \rife{evol} in the limit case $\gamma=1$. We will mainly be concerned with the case $p\geq 2$. 

\medskip
The paper is structured as follows: in the next section we set the main assumptions, we state our main result, and we introduce some preliminary tools. Section \ref{3} is devoted to prove existence of a solution in the degenerate case $p>2$. In Section \ref{4} we prove the existence of a solution in the case $p=2$ provided the size of the lower order term is small (e.g., $B<1$ in \rife{evol}), while in Section \ref{5} we will prove a nonexistence result in the complementary case $B\geq 1$. In the last section of the paper we will also give an account on the singular case $p<2$ by showing some finite time extinction results, and by discussing some explicit examples of evolution.

\subsection*{Notations.} From now on, we will set $Q = (0,T) \times \Omega$ and $\Gamma = (0,T) \times \partial\Omega$.
If not otherwise specified, we will denote by $C$ several constants whose value may change from line to line and, sometimes, on the same line. These values will only depend on the data (for instance $C$ can depend on $N$, $\Omega$, $T$, $B$, $\alpha$, and $\beta$) but they will never depend on the indexes of the sequences we will often introduce. For the sake of simplicity we will often use the simplified notation
$$
\int_{Q} f\doteq\int_{Q} f(t,x)\ dtdx\,,
$$
when referring to integrals when no ambiguity on the variable of integration is possible.

For fixed $k>0$ we will made use of the truncation functions $T_{k}$ and $G_{k}$ defined as
$$
T_k(s) = \max (-k,\min (s,k))\,, 
$$
and
$$
G_k(s)= s - T_{k}(s) = (|s|-k)^+ \operatorname{sign}(s)\,. 
$$

\section{Setting of the problem and preliminary results}

We consider a parabolic differential problem of the form
\begin{equation}\label{pro}
\left\{
\begin{array}{cl}
\dys u'- {\rm div}\,a(t,x,\nabla u) + H(t, x,u,\nabla u) = 0 \quad & \mbox{in $Q$,} \\[1.5 ex]
u(0,x)= u_0 (x) &\mbox{in $\Omega$,} \\[1.5 ex]
u(t,x)=0 &\mbox{on $\Gamma$,}\quad 
\end{array}\right.
\end{equation}
where 
\begin{itemize}
\item $\Om$ is a bounded open set in $\R^N$, with $N\geq 2$, $T$ is a positive number;
\item $a(t,x,\xi)\ :\ (0,T)\times \Om \times \R^N \to \R^N$ is a Carath\'eodory vector-valued function such that
\begin{equation}\label{oper1}
 a(t,x,\xi)\cdot \xi \geq \al |\xi|^p\,,
\end{equation}
\begin{equation}\label{oper2}
 |a(t,x,\xi)| \leq \beta |\xi|^{p-1}\,,
\end{equation}
\begin{equation}\label{oper3}
 (a(t,x,\xi)- a(t,x,\eta))\cdot (\xi-\eta)>0 \,,
\end{equation}
for a.e. $(t,x) \in (0,T)\times \Om$, for every $\xi, \eta \in \R^N$, with $\xi\not=\eta$, where $\al, \beta$ are positive constants and $p>1$;
\item $H(t,x,s, \xi)\ :\ (0,T)\times \Om \times \R_+\times \R^N \to \R$ is a Carath\'eodory function such that
\begin{equation}\label{operH1}
 0\leq H(t,x,s, \xi)\leq B\frac{|\xi|^p}{s}\,,
\end{equation}
for a.e. $(t,x) \in (0,T)\times \Om$, for every $s>0$, $\xi\in \R^N$, where $B$ is a positive constant;
\item $u_{0}(x)$ is a function in $L^{\infty}(\Omega)$ such that $u_{0}\geq c>0$ almost everywhere on $\Omega$.
\end{itemize}
In \eqref{pro}, we denote by $u'$ the partial derivative with respect to time, while $\nabla u$ stands for the gradient with respect to the space variable $x$.

Consider problem \rife{pro}. Here is the meaning of weak solution for such a problem.

\begin{definition}\label{defin}
A weak solution to problem \rife{pro} is a function $u$ in $L^{p}(0,T; W^{1,p}_{0}(\Omega))\cap C(0,T;L^{1}(\Omega))$ such that for every $\omega\subset\subset \Omega$ there exists $c_{\omega}$ such that $u\geq c_{\omega}>0$ in $(0,T) \times \omega$; furthermore, we have that 
$$
-\int_Q u\varphi' +\int_{Q}a(t,x,\nabla u)\cdot \nabla \varphi +\int_{Q} H(t, x, u, \nabla u)\,\varphi=\int_{\Omega} u_{0}\varphi(0)\,,
$$
for every $\varphi\in C^{1}_{\rm c}([0,T)\times\Omega)$, that is, for every $C^1$ function which vanishes in a neighborhood of $\{T\}\times \Om$ and of $ (0,T)\times \partial \Om$. 
\end{definition}

Note that under assumption \eqref{operH1}, the function $H(t,x,u,\D u)$ belongs to $L^{1}_{{\rm loc}}(Q)$ thanks to the property of local positivity required on $u$.

\medskip

Our main result is the following:
\begin{theorem}\label{pm2}\sl
If $p>2$, there exists a weak solution to problem \eqref{pro}. Moreover, if $p=2$, there exists a solution if the constant $B$ appearing in \eqref{operH1} satisfies $B<\al$.
\end{theorem}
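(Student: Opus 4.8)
The plan is to prove existence via approximation, carefully exploiting the sign and the structure of the singular term. First I would introduce a sequence of approximating problems in which the singularity is regularized: replace $H(t,x,u,\nabla u)$ by something like $H_n(t,x,u,\nabla u) = \frac{H(t,x,u,\nabla u)}{1+\frac1n H(t,x,u,\nabla u)}$, or truncate the coefficient by replacing $\frac{|\xi|^p}{s}$ with $\frac{|\xi|^p}{s+\frac1n}$ and truncating at height $n$, so that $H_n$ is bounded and the associated problem
\begin{equation}
\left\{
\begin{array}{cl}
u_n'- {\rm div}\,a(t,x,\nabla u_n) + H_n(t,x,u_n,\nabla u_n) = 0 & \mbox{in $Q$,} \\[1ex]
u_n(0,x)= u_0(x) & \mbox{in $\Omega$,} \\[1ex]
u_n(t,x)=0 & \mbox{on $\Gamma$,}
\end{array}\right.
\end{equation}
admits a weak energy solution $u_n \in L^p(0,T;W^{1,p}_0(\Omega)) \cap C(0,T;L^2(\Omega))$ by standard pseudomonotone operator theory (Leray--Lions plus a bounded perturbation). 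Since $H_n \geq 0$ and $u_0 \geq c > 0$, I expect $u_n \geq 0$; moreover a comparison with the solution of the problem without lower-order term should give $0 \le u_n \le \|u_0\|_{L^\infty(\Omega)} =: M$ for all $n$, which is the crucial $L^\infty$ bound.

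The next block of steps is the a priori estimates. Using $u_n$ itself as test function (legitimate after the usual Steklov-averaging) and the coercivity \eqref{oper1}, together with the fact that the singular term is nonnegative and hence helps, I would get a uniform bound for $u_n$ in $L^p(0,T;W^{1,p}_0(\Omega))$ and for $u_n'$ in $L^{p'}(0,T;W^{-1,p'}(\Omega)) + L^1(Q)$ once the zero-order term is controlled. The control of $\int_Q H_n(t,x,u_n,\nabla u_n)$ in $L^1_{\rm loc}$ is obtained by testing with something like $\varphi^p$ times a cutoff, or more precisely with a function supported where $u_n$ is bounded below: the point is that by the maximum principle combined with a local barrier argument (comparing with the solution of the degenerate equation $v' - {\rm div}\,a(t,x,\nabla v) = 0$, which stays locally positive by Harnack-type estimates for degenerate parabolic equations), one proves the uniform local positivity $u_n \geq c_\omega > 0$ on $(0,T)\times\omega$ for each $\omega \subset\subset \Omega$. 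Then on such $\omega$ one has $H_n \le B\frac{|\nabla u_n|^p}{c_\omega}$, which is bounded in $L^1$ by the energy estimate. This gives compactness: up to a subsequence $u_n \to u$ strongly in $L^p(Q)$ and a.e. (Aubin--Simon), with $u$ in the right space and $u \geq c_\omega$ on $(0,T)\times\omega$.

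The main obstacle, as usual for these equations, is passing to the limit in the two nonlinear terms, which requires almost-everywhere convergence of the gradients $\nabla u_n \to \nabla u$. The standard route is to prove this by the Boccardo--Murat / Landes technique, testing the difference equation with a truncation of $u_n - u$ (localized, and smoothed in time), and using the monotonicity \eqref{oper3} together with the sign of $H_n$ to absorb the singular term — here the hypothesis $p>2$ (or $B<\alpha$ when $p=2$) enters, because in the borderline case the coefficient of $|\nabla u_n|^p/u_n$ must be dominated by the coercivity constant to make the corresponding estimate (after using $u_n$ itself or $G_k(u_n)$ as test function in the local estimate) close. Once $\nabla u_n \to \nabla u$ a.e. is established, $a(t,x,\nabla u_n) \rightharpoonup a(t,x,\nabla u)$ in $L^{p'}$ and $H_n(t,x,u_n,\nabla u_n) \to H(t,x,u,\nabla u)$ in $L^1_{\rm loc}(Q)$ by Vitali's theorem (equi-integrability coming from the strong-convergence-of-gradients argument, or from Fatou plus an $L^1$ bound argument), and one concludes by passing to the limit in the weak formulation against $\varphi \in C^1_{\rm c}([0,T)\times\Omega)$. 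I would expect the degenerate case $p>2$ to additionally need the local positivity argument to be slightly more delicate since Harnack for degenerate parabolic equations is intrinsic; but the model transformation to a large-solution problem mentioned in the introduction suggests the local barrier is robust.
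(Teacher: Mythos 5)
Your overall architecture (regularize, $L^\infty$ and energy bounds, uniform local positivity, a.e./strong convergence of gradients via a Boccardo--Murat/Landes argument, Vitali for the lower order term) matches the paper's, but the step on which everything hinges --- the uniform local positivity $u_n\geq c_\omega>0$ on $(0,T)\times\omega$ --- is asserted through an argument that cannot work. You propose to obtain it ``by the maximum principle combined with a local barrier argument, comparing with the solution of $v'-\mathrm{div}\,a(t,x,\nabla v)=0$, which stays locally positive by Harnack-type estimates.'' The comparison goes the wrong way: since $H\geq 0$, each $u_n$ satisfies $u_n'-\mathrm{div}\,a(t,x,\nabla u_n)=-H\leq 0$, i.e.\ $u_n$ is a \emph{subsolution} of the equation without the lower-order term, so comparison with that equation yields only an \emph{upper} bound. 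The singular term is an absorption term driving $u_n$ towards zero, and whether positivity survives is exactly the delicate point: the paper's Proposition \ref{none} shows that for $p=2$ and $B\geq\alpha$ the approximating solutions converge to zero locally uniformly, even though the solution of the homogeneous heat-type equation stays locally positive. Hence no barrier/Harnack argument based on the clean equation can prove the lemma, and your proposal never uses $p>2$ (or $B<\alpha$) where it is actually needed.

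In the paper the structural hypotheses enter precisely in the positivity lemma, through a change of unknown rather than comparison. For $p>2$ one may assume $B>\max(\alpha,p-1)$, test with $-u_n^{-B}\psi$ (admissible because the approximations have boundary and initial data $\geq 1/n$, not the truncated-$H$ scheme you chose), and find that $w_n=c\,u_n^{-\frac{B+1-p}{p-1}}$ satisfies a parabolic inequality of the type \eqref{wn3}; a \emph{local} Stampacchia iteration (Lemma \ref{locals}) combined with Gagliardo--Nirenberg \eqref{gnc} gives a local $L^\infty$ bound on $w_n$, uniform in $n$ even though $w_n$ blows up on $\Gamma$, and this bound translates into $u_n\geq c_\omega$. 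For $p=2$, $B<\alpha$, testing with $u_n^{-B/\alpha}\psi$ shows that $v_n=\bigl(u_n/(1-\theta)\bigr)^{1-\theta}$, $\theta=B/\alpha<1$, is a supersolution of a uniformly parabolic problem, and the classical minimum principle applies; here the restriction $B<\alpha$ is used, consistently with the nonexistence result for $B\geq\alpha$. By contrast, your placement of the conditions $p>2$ or $B<\alpha$ in the gradient-convergence step is not where they are needed: once local positivity is known, the strong local convergence of gradients (Proposition \ref{str}) is obtained with the exponential test function $\varphi_\lambda(u_n-u_\nu)\phi$, choosing $\lambda$ so large that $\varphi_\lambda'(s)-\frac{B}{\alpha c_\omega}|\varphi_\lambda(s)|\geq\frac12$, and this works for any $B>0$. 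Also note that testing with a sign-changing truncation of $u_n-u$ and invoking only the sign of $H_n$, as you suggest, does not absorb the natural-growth term; the exponential weight and the lower bound $u_n\geq c_\omega$ are both needed. So the proposal has a genuine gap: the key idea of the power-type change of variables (and the resulting local Stampacchia estimate, respectively minimum principle) is missing, and without it the uniform local positivity --- hence the whole passage to the limit --- is unsupported.
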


\begin{remark}
Assumption $B<\al$ is, in some sense, optimal if $p=2$: this will be the content of Proposition \ref{none} below, in which we show how approximating problems may degenerate if $B\geq \al$. Assumption $p\geq 2$ is also needed in this context since, as we will see in Section \ref{6}, finite time extinction can occur if $p<2$, so that Definition \ref{defin} should be suitably modified. 
\end{remark}

Our strategy in order to prove Theorem \ref{pm2} will rely on an approximation argument. The next subsection will introduce our approximating problems.

\subsection{The approximating problems}

We consider the approximating problems
 \begin{equation}\label{pron}
\left\{
\begin{array}{cl}
\dys 
 u_n'- {\rm div}\,a(t,x,\nabla u_n) + H(t, x,u_n,\nabla u_n) = 0 &\mbox{in $Q$,} \\[1.5 ex]
\dys u_n(0,x)= u_0 (x)+\frac1n &\mbox{in $\Omega$,} \\[1.5 ex]
\dys u_n(t,x)=\frac1n &\mbox{on $\Gamma$.} 
\end{array}\right.
\end{equation}
A weak solution to this problem is a function $u_n$ such that $u_n\geq\frac1n$ a.e.\ in $Q$, $u_{n}-1/n \in L^{p}(0,T; W^{1,p}_{0}(\Omega))\cap C([0,T];L^{1}(\Omega))$ and $u_{n}'\in L^{1}(Q)+L^{p'}(0,T;W^{-1,p'}(\Omega))$, and such that
 \begin{equation}\label{pronweak}
\int_{0}^{T}\langle u_{n}',v\rangle +\int_{Q}a(t,x,\nabla u_n)\cdot\nabla v +\int_{Q} H(t, x,u_n,\nabla u_n)\,v=0\,,
 \end{equation}
for any $v\in L^{p}(0,T;W^{1,p}_{0}(\Omega))\cap L^{\infty}(Q)$. 
A nonnegative weak solution $u_{n}$ to problem \rife{pron} does exist. In fact, 
problem \eqref{pron} is equivalent to 
 \begin{equation}\label{pron3}
\left\{
\begin{array}{cl}
\dys 
 v_n'- {\rm div}\,a(t,x,\nabla v_n) + H(t, x,v_n+\frac1n,\nabla v_n) = 0 \quad &\mbox{in $Q$,} \\[1.5 ex]
\dys v_n(0,x)= u_0 (x) &\mbox{in $\Omega$,} \\[1.5 ex]
\dys v_n(t,x)=0 &\mbox{on $\Gamma$,} 
\end{array}\right.
\end{equation}
where $v_n = u_n-\frac1n$.

To prove that a solution of \eqref{pron3} exists, we first extend $H(t,x,s,\xi)$ to zero for $s\leq 0$, and, for $\eps>0$, we consider the problem 
 \begin{equation}\label{pron2eps}
\left\{
\begin{array}{cl}
\dys 
v_{n,\eps}'- {\rm div}\,a(t,x,\nabla v_{n,\eps}) + \frac{T_\eps(v_{n,\eps})}{\eps} H(t, x,v_{n,\eps}+\frac1n,\nabla v_{n,\eps}) = 0 &\mbox{in $Q$,} \\[1.5 ex]
\dys v_{n,\eps}(0,x)= u_0 (x) &\mbox{in $\Omega$,} \\[1.5 ex]
\dys v_{n,\eps}(t,x)=0 &\mbox{on $\Gamma$.} 
\end{array}\right.
\end{equation}
A nonnegative solution $v_{n,\eps}$ to problem \eqref{pron2eps} exists by the results proven in \cite{do}.
Then, if we take a sequence of values $\eps\downarrow 0$, one can prove that the sequence $\{v_{n,\eps}\}_{\eps}$ converges strongly in $L^p(0,T;W^{1,p}_0(\Om))$ to some function $v_n$.
Then one can pass to the limit for $\eps\downarrow 0$ in the first two terms of \eqref{pron2eps}, in the sense of distributions. As far as the third term is concerned, we observe that, on the set $\{v_n>0\}$, the function $\frac{T_\eps(v_{n,\eps})}{\eps}$ converges a.e.\ to 1, while on the set $\{v_n=0\}$ (where we cannot identify the limit of $\frac{T_\eps(v_{n,\eps})}{\eps}$, but where $\nabla v_n=0$ a.e.\ by Stampacchia's result contained in \cite{S}) the term $H(t, x,v_{n,\eps}+\frac1n,\nabla v_{n,\eps})$ converges a.e.\ to $H(t, x,v_{n}+\frac1n,\nabla v_{n})$, which is zero a.e.\ on this set, since $H(t,x,\frac1n,0)=0$ a.e.\ by assumption \eqref{operH1}.
Therefore, $v_n$ is a weak solution of \eqref{pron3}.
 
\subsection{Basic a priori estimates}

A standard argument allows us to show that some basic estimates on the approximating solutions hold. 
We collect them in the following

\begin{lemma}\label{base}\sl
Let $p \geq 2$, and let $u_{n}$ be a solution to problem \rife{pron}. Then, 
$$
\|u_{n}\|_{L^{p}(0,T;W^{1,p}(\Omega))}\leq C\,,
\qquad
\|u_{n}\|_{L^{\infty}(Q)}\leq C\,,
$$
and
\begin{equation}\label{abs}
\int_{Q}H(t,x,u_n,\nabla u_n)\leq C\,.
\end{equation}
Moreover, there exists a function $u$ in $L^{p}(0,T;W_{0}^{1,p}(\Omega))$ such that (up to subsequences) $u_{n}-\frac1n$ converges to $u$ weakly in $L^{p}(0,T;W_{0}^{1,p}(\Omega))$ and a.e.\ on $Q$. Finally, 
$$
\nabla u_{n} \longrightarrow \nabla u\,, \qquad \mbox{a.e. on $Q$.} 
$$
\end{lemma}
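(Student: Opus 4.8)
The strategy is to work with the equivalent problem \eqref{pron3} for $v_n=u_n-\frac1n$, choosing in turn three test functions tailored to the fact that the lower order term $H$ is nonnegative, and then to combine the resulting bounds with a compactness argument. Throughout, $v_n\ge 0$ and $v_n\in C([0,T];L^1(\Omega))$ are available by construction.

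First, for the $L^\infty$ bound I would test with $G_k(v_n)$, $k=\|u_0\|_{L^\infty(\Omega)}$ (rigorously, with $T_m(G_k(v_n))$ and then let $m\to\infty$). Since $v_n\ge 0$ one has $G_k(v_n)=(v_n-k)^+\in L^p(0,T;W^{1,p}_0(\Omega))$; the parabolic term equals $\int_\Omega\big(\int_0^{v_n(T)}G_k\big)\ge 0$ because $G_k(u_0)\equiv 0$, the principal part is $\ge\alpha\int_Q|\nabla G_k(v_n)|^p\ge 0$ by \eqref{oper1}, and the term containing $H$ is $\ge 0$ by \eqref{operH1}; hence the three terms vanish, $\nabla G_k(v_n)=0$ a.e., and therefore $0\le v_n\le\|u_0\|_{L^\infty(\Omega)}$ a.e., i.e.\ $\|u_n\|_{L^\infty(Q)}\le\|u_0\|_{L^\infty(\Omega)}+1$ (this step uses neither $p\ge 2$ nor $B<\alpha$, and together with $v_n\in C([0,T];L^1(\Omega))$ it also shows $v_n\in C([0,T];L^2(\Omega))$). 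Testing next with $v_n$ itself, the parabolic term gives $\tfrac12\|v_n(T)\|_{L^2(\Omega)}^2-\tfrac12\|u_0\|_{L^2(\Omega)}^2$, the $H$-term is nonnegative, and \eqref{oper1} yields $\alpha\int_Q|\nabla v_n|^p\le\tfrac12\|u_0\|_{L^2(\Omega)}^2$; together with the $L^\infty$ bound this gives $\|u_n\|_{L^p(0,T;W^{1,p}(\Omega))}\le C$.

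For \eqref{abs} one cannot test with the constant $1$, so I would use $\tfrac1\delta T_\delta(v_n)\in L^p(0,T;W^{1,p}_0(\Omega))\cap L^\infty(Q)$ and let $\delta\to 0$: the principal part is nonnegative and may be discarded, the parabolic term tends to $\int_\Omega v_n(T)-\int_\Omega u_0\ge-\|u_0\|_{L^1(\Omega)}$ (integration by parts in $t$, then dominated convergence), while $\int_Q H(t,x,u_n,\nabla u_n)\,\tfrac1\delta T_\delta(v_n)$ increases to $\int_Q H(t,x,u_n,\nabla u_n)$ by monotone convergence; hence $\int_Q H(t,x,u_n,\nabla u_n)\le\|u_0\|_{L^1(\Omega)}$. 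What makes this work is that $v_n\ge 0$, so that the boundary flux of the principal part has the correct sign. From the bounds above, up to a subsequence $v_n\rightharpoonup u$ weakly in $L^p(0,T;W^{1,p}_0(\Omega))$; moreover, by \eqref{pron3}, $v_n'={\rm div}\,a(t,x,\nabla v_n)-H(t,x,u_n,\nabla u_n)$ is bounded in $L^{p'}(0,T;W^{-1,p'}(\Omega))+L^1(Q)$ by \eqref{oper2} and \eqref{abs}, hence in $L^1(0,T;W^{-1,s}(\Omega))$ for some $s>1$, so a standard compactness result (Aubin--Simon) gives $v_n\to u$ strongly in $L^p(Q)$ and a.e.\ on $Q$, and therefore $u_n\to u$ a.e.\ as well.

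It remains to prove $\nabla u_n\to\nabla u$ a.e.\ on $Q$, and I expect this to be the real difficulty. I would run the Boccardo--Murat argument in its parabolic form: with a Landes-type time-regularization $\phi_h$ of $u$ and the test function $T_\delta(v_n-\phi_h)$, the contribution of the zeroth order term is bounded by $\delta\int_Q H(t,x,u_n,\nabla u_n)\le C\delta$ thanks to \eqref{abs}, the parabolic term is controlled by the usual lower estimate after letting $n\to\infty$ and then $h\to\infty$, and \eqref{oper3} together with the pointwise monotonicity lemma then yields, after $\delta\to 0$, that $\nabla u_n\to\nabla u$ a.e.\ on $Q$. The obstacle is purely technical: it requires the full parabolic machinery (time regularization, the lower estimate for the time term, the monotonicity trick), and the sign of $H$ together with the $L^1$ bound \eqref{abs} must be tracked carefully at every step.
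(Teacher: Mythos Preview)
Your proposal is correct and follows essentially the same route as the paper: the paper obtains \eqref{abs} by testing with $\frac{1}{\eps}T_\eps(u_n-\frac1n)$ and letting $\eps\to 0$ (using Fatou rather than monotone convergence, an inessential difference), and then cites \cite{bg} for the first two estimates and \cite{bdgo} for the a.e.\ convergence of the gradients. You have simply written out, with the standard test functions $G_k(v_n)$, $v_n$, and $T_\delta(v_n-\phi_h)$, the arguments that those references contain; the ingredients---positivity of $H$, the sign of the boundary flux, and the $L^1$ bound on $H$ feeding into the Boccardo--Murat machinery---are exactly the ones the paper invokes.
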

\begin{proof}[{\sl Proof}.]
The proof of the first two estimates is quite standard and can be deduced for instance as in \cite{bg} (see also \cite{bdgo}) using the fact that the lower order term is positive. 

In order to get \rife{abs} one can use $\frac{1}{\eps}\,T_\eps(u_n-\frac1n)$ as test function in \rife{pron}. Integrating by parts, dropping nonnegative terms and letting $\vare$ go to zero one gets, by Fatou's lemma
$$\int_{Q}H(t,x,u_n,\nabla u_n)
\leq 
\int_{\Omega} \Big(u_{0}(x)+\frac1n\Big)\,,
$$
which implies \rife{abs}. The almost everywhere convergence of the gradients of $u_{n}$ is a consequence of \rife{abs} and of a result in \cite{bdgo}. 
 
\end{proof}

\medskip
\section{Proof of Theorem \ref{pm2}: the case $p>2$}\label{3}
In this section we give the proof of Theorem \ref{pm2} in the degenerate case $p>2$. 
\medskip
We wish to pass to the limit in the weak formulation of \rife{pron}. 

A key tool in order to pass to the limit will be the following one.
\begin{lemma}\label{pos}\sl
Let $p > 2$, and let $u_{n}$ be a weak solution of problem \rife{pron}. Then, for any $\omega\subset\subset \Omega$, there exists a constant $c_{\omega}$ such that
$$
u_{n}\geq c_{\omega}>0\,, \quad \mbox{in $(0,T) \times \omega$, for every $n$ in $\mathbb{N}$.}
$$
\end{lemma}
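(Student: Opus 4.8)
The goal is a uniform-in-$n$ lower bound for $u_n$ on cylinders $(0,T)\times\omega$ with $\omega\subset\subset\Omega$. The natural strategy, exploiting the degeneracy $p>2$, is to compare $u_n$ from below with an explicit (or semi-explicit) subsolution that is bounded away from zero on $\omega$ uniformly in $n$. Since $u_n\ge 1/n$ at $t=0$ and on $\Gamma$, and $u_0+1/n\ge c>0$, the initial datum is already uniformly positive; the only place positivity could deteriorate is near the lateral boundary $\Gamma$, but the parabolic smoothing together with the sign of the zero-order term should prevent that in finite time when $p>2$. Concretely, first I would fix $\omega\subset\subset\omega'\subset\subset\Omega$ and work on $(0,T)\times\omega'$; the key is that $u_n$ solves, in the sense of \eqref{pronweak}, an equation of the form $u_n'-\dive\,a(t,x,\nabla u_n)\ge 0$ because $H\ge 0$ by \eqref{operH1}. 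Thus $u_n$ is a nonnegative supersolution of the (homogeneous, structurally $p$-Laplacian) parabolic operator $\partial_t-\dive\,a$, with $u_n\ge c$ on $\{0\}\times\Omega$.

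The core analytic input I would invoke is the weak Harnack inequality / lower bound for nonnegative supersolutions of degenerate parabolic equations with $p$-growth (DiBenedetto-type intrinsic Harnack estimates, or the expansion-of-positivity machinery): a nonnegative supersolution that is bounded below by $c$ on a space slice at time $t=0$ stays bounded below by a constant $c_\omega=c_\omega(c,\omega,\omega',T,N,p,\alpha,\beta)>0$ on $(0,T)\times\omega$. The crucial point is that this constant depends only on the structural data $\alpha,\beta,p$ in \eqref{oper1}--\eqref{oper2}, on the geometry, on $T$, and on $c$ — and not on $n$, nor on the precise form of $a$ beyond \eqref{oper1}--\eqref{oper3}, nor on $H$ (which only helps, being nonnegative). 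An alternative, more self-contained route is to build an explicit barrier: look for a subsolution of the form $\psi(t,x)=\eta(t)\,\phi(x)$ with $\phi$ a fixed smooth positive function compactly supported in a slightly larger set and $\eta$ solving an ODE, using that for $p>2$ the $p$-Laplacian of a smooth function is bounded, so that on the region where $\phi$ is bounded away from zero the term $\dive\,a(t,x,\nabla\psi)$ is controlled and one can absorb it by choosing $\eta$ appropriately; then a comparison principle for \eqref{pron} (valid because $H(t,x,s,\xi)\ge 0$ and the operator is monotone by \eqref{oper3}) gives $u_n\ge\psi$ on $(0,T)\times\omega$.

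I expect the main obstacle to be the comparison step itself: one must justify that $u_n$, which is only a weak solution with $u_n'\in L^1(Q)+L^{p'}(0,T;W^{-1,p'}(\Omega))$, can legitimately be compared from below with the barrier (or that the weak Harnack estimate applies to this notion of solution). This requires a careful use of an admissible test function — typically $(\psi-u_n)^+$ suitably truncated and mollified in time (Landes-type time regularization) — together with the monotonicity \eqref{oper3} to kill the principal-part difference and the nonnegativity of $H$ to discard the zero-order contribution, ultimately showing $\int_\Omega (\psi-u_n)^+(t)\,dx$ is nonincreasing and vanishes at $t=0$. The degeneracy $p>2$ is what makes the barrier construction work (bounded $p$-Laplacian of smooth functions), and it is precisely here that the hypothesis $p>2$ — rather than $p=2$ — is used, consistently with the fact that the $p=2$ case is treated separately in Section~\ref{4}.
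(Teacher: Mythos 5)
There is a genuine gap, and it occurs at the very first step of your plan: the sign of the lower-order term. Since $H\geq 0$, the equation in \eqref{pron} gives $u_n'-{\rm div}\,a(t,x,\nabla u_n)=-H(t,x,u_n,\nabla u_n)\leq 0$, so $u_n$ is a \emph{subsolution}, not a supersolution, of the homogeneous principal-part equation. The absorption term pushes $u_n$ \emph{down}; it does not ``only help''. Consequently neither the weak Harnack inequality for nonnegative supersolutions nor a comparison from below that simply discards $H$ is available, and the whole difficulty of the lemma — controlling the singular term $0\leq H\leq B|\xi|^p/s$ — is precisely what your argument bypasses. This is not a removable technicality: Proposition \ref{none} shows that for $p=2$ and $B\geq\alpha$ the conclusion actually fails (the approximating solutions tend to zero locally uniformly), even though in your reading $u_n$ would still be a ``supersolution'' with uniformly positive initial datum. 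Relatedly, your explanation of where $p>2$ enters (boundedness of the $p$-Laplacian of a smooth barrier) applies verbatim to $p=2$, so it cannot be the correct mechanism.

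Even after fixing the sign, the barrier route as sketched has two further problems: to compare from below you would need $\psi$ to be a subsolution of the full equation for \emph{every} admissible $H$, i.e. $\psi'-{\rm div}\,a(t,x,\nabla\psi)+B|\nabla\psi|^{p}/\psi\leq 0$, so the worst-case singular term must be built into the barrier; and since $a$ is only a Carath\'eodory field satisfying \eqref{oper1}--\eqref{oper3} (think of $a(t,x,\xi)=b(t,x)|\xi|^{p-2}\xi$ with a merely measurable $b\in[\alpha,\beta]$), one cannot certify that a fixed smooth $\psi$ is a weak subsolution by computing its $p$-Laplacian pointwise. The paper's proof is structured exactly around the absorption term: it tests \eqref{pronweak} with $-u_n^{-B}\psi$ (assuming without loss of generality $B>\max(\alpha,p-1)$) so that the upper bound in \eqref{operH1} is absorbed by the coercivity \eqref{oper1}; it then changes unknown to $w_n=c\,u_n^{-\frac{B+1-p}{p-1}}$, which satisfies a parabolic differential inequality with bounded initial data but lateral data blowing up with $n$, and proves a \emph{local} $L^\infty$ bound for $w_n$, uniform in $n$, by a Stampacchia--De Giorgi iteration (Gagliardo--Nirenberg inequality \eqref{gnc} plus the local Stampacchia Lemma \ref{locals}). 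The hypothesis $p>2$ is used there, through $\gamma=\frac{(p-1)(B-1)}{B+1-p}>p-1$, which is what makes the local estimate independent of the exploding boundary values; undoing the change of variable then yields the uniform lower bound for $u_n$.
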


\medskip

Before the proof we recall some technical tools we will use. 
The first one is a well known consequence of Gagliardo-Nirenberg inequality which is valid on any cylinder of the type $Q=(0,T)\times\Omega$ with bounded $\Omega$ (see for instance \cite{n}, Lecture II). 
\begin{lemma}\sl
Let $v\in L^p(0,T; W^{1,p}_0 (\Omega))\cap L^{\infty}(0,T;\elle\beta)$, with $p\geq 1$, $\beta\geq 1$. Then $v\in L^\sigma(Q)$ with $\sigma=p\frac{N+\beta}{N}$ and 
\begin{equation}\label{gnc}
\int_{Q} |v|^\sigma \leq C \|v\|_{L^{\infty}(0,T;\elle\beta)}^{\frac{\beta p}{N}}\int_{Q} |\nabla v |^p\,.
\end{equation}\end{lemma}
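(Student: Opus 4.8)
The plan is to deduce the inequality from the classical (stationary) Gagliardo--Nirenberg interpolation inequality applied on a.e.\ time slice $\{t\}\times\Omega$, with the interpolation parameters chosen so that, after raising to the power $\sigma$ and integrating in $t$, the gradient factor reproduces exactly $\int_Q|\nabla v|^p$ while the $L^\beta$--norm is absorbed into the $L^\infty(0,T;L^\beta(\Omega))$--norm.

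Concretely, set $\vartheta=\frac{N}{N+\beta}\in(0,1)$ and $\sigma=p\,\frac{N+\beta}{N}$, which satisfy the interpolation identity $\frac1\sigma=\vartheta\big(\frac1p-\frac1N\big)+(1-\vartheta)\frac1\beta$ as well as $\vartheta\sigma=p$ and $(1-\vartheta)\sigma=\sigma-p=\frac{\beta p}{N}$. Since $\vartheta<1$ (here $\beta\ge1$), the Gagliardo--Nirenberg inequality is available: extending $v(t,\cdot)$ by zero to $\rn$ (legitimate as $v(t,\cdot)\in W^{1,p}_0(\Omega)$ for a.e.\ $t$),
\[
\|v(t,\cdot)\|_{L^\sigma(\Omega)}\le C\,\|\nabla v(t,\cdot)\|_{L^p(\Omega)}^{\vartheta}\,\|v(t,\cdot)\|_{L^\beta(\Omega)}^{1-\vartheta},
\]
with $C=C(N,p,\beta)$. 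Raising to the power $\sigma$ and using $\vartheta\sigma=p$, $(1-\vartheta)\sigma=\frac{\beta p}{N}$ gives, for a.e.\ $t$,
\[
\int_\Omega|v(t,x)|^\sigma\,dx\le C\,\|v\|_{L^\infty(0,T;L^\beta(\Omega))}^{\frac{\beta p}{N}}\int_\Omega|\nabla v(t,x)|^p\,dx .
\]
Integrating this over $t\in(0,T)$ yields \eqref{gnc}; in particular the right-hand side is finite under the hypotheses, so $v\in L^\sigma(Q)$.

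There is essentially no genuine obstacle here: the argument is the stationary Gagliardo--Nirenberg inequality followed by Fubini's theorem, and the only thing to verify is the bookkeeping of exponents — it is the requirement $\vartheta\sigma=p$ (so that $\int_0^T\|\nabla v(t,\cdot)\|_{L^p(\Omega)}^{\vartheta\sigma}\,dt=\int_Q|\nabla v|^p$) that pins down $\sigma=p(N+\beta)/N$ and $\vartheta=N/(N+\beta)$. One also records the routine measurability facts that make the slicewise inequality integrable in $t$: $v\in L^p(0,T;W^{1,p}_0(\Omega))$ gives $t\mapsto\|\nabla v(t,\cdot)\|_{L^p(\Omega)}\in L^p(0,T)$, and $v\in L^\infty(0,T;L^\beta(\Omega))$ gives the uniform bound on $\|v(t,\cdot)\|_{L^\beta(\Omega)}$. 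The stated range $p\ge1$, $\beta\ge1$ (and $N\ge1$) is exactly the range in which the Gagliardo--Nirenberg inequality with this $\vartheta<1$ holds, so no separate discussion of the cases $p<N$, $p=N$, $p>N$ is needed beyond invoking the classical inequality (for $p\ge N$ one may view it as interpolation between $L^\beta$ and a large Lebesgue space, or between $L^\beta$ and $C^{0,1-N/p}$ when $p>N$).
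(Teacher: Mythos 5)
Your proof is correct and is exactly the standard argument the paper has in mind: the paper does not prove this lemma but simply cites it as a well-known consequence of the Gagliardo--Nirenberg inequality (Nirenberg, Lecture II), and your slicewise interpolation with $\vartheta=N/(N+\beta)$, $\vartheta\sigma=p$, $(1-\vartheta)\sigma=\beta p/N$, followed by integration in $t$, is precisely how that consequence is derived. The exponent bookkeeping and the extension-by-zero justification are all in order.
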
 
Finally, we will need the following local version of a lemma by Stampacchia (see \cite{S}).

\begin{lemma}\label{locals}\sl
Let $\omega(h,r)$ be a function defined on $[0,+\infty)\times[0,1]$, which is nonincreasing in $h$ and nondecreasing in $r$; suppose that there exist constants $k_{0}\geq 0$, $M$, $\rho$, $\sigma>0$, and $\eta>1$ such that
$$
\omega(h,r)\leq \frac{M \omega(k,R)^{\eta}}{(h-k)^{\rho}(R-r)^{\sigma}}\,,
$$
for all $h>k\geq k_{0}$ and $0\leq r<R\leq 1$. Then, for every $r$ in $(0,1)$, there exists $d>0$, given by
$$
d^{\rho}= \frac{M 2^{\frac{\eta (\rho + \sigma)}{\eta -1}}\omega (k_{0},1)^{\eta -1}}{(1-r)^{\sigma}}
$$
 such that
$$
\omega(d,r)=0.
$$
\end{lemma}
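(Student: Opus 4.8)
The plan is to run the classical De Giorgi--Stampacchia iteration on two coupled dyadic sequences: one increasing the truncation level and one shrinking the radius, exploiting that $\omega$ decreases in $h$ and increases in $r$ so that both movements push $\omega$ toward $0$. Fix $r\in(0,1)$ and set, for $n\geq 0$,
\[
k_n = k_0 + d\,(1-2^{-n}), \qquad R_n = r + (1-r)\,2^{-n},
\]
so that $k_n\uparrow k_0+d$ and $R_n\downarrow r$, with $R_0=1$ and $k_0$ the base level. Writing $Y_n=\omega(k_n,R_n)$, the quantity $d>0$ will be fixed at the very end so as to force $Y_n\to 0$.

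First I would feed the hypothesis with $h=k_{n+1}>k=k_n$ and $r=R_{n+1}<R=R_n$. Since $k_{n+1}-k_n = d\,2^{-(n+1)}$ and $R_n-R_{n+1}=(1-r)\,2^{-(n+1)}$, this gives the recursive estimate
\[
Y_{n+1}\leq \frac{M\,Y_n^{\eta}}{\big(d\,2^{-(n+1)}\big)^{\rho}\big((1-r)\,2^{-(n+1)}\big)^{\sigma}} = A\,b^{\,n}\,Y_n^{\eta},
\]
with $b=2^{\rho+\sigma}$ and $A=\dfrac{M\,2^{\rho+\sigma}}{d^{\rho}\,(1-r)^{\sigma}}$. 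The core is then the elementary geometric-decay fact: if $Y_{n+1}\leq A\,b^{n}Y_n^{\eta}$ with $\eta>1$, $b>1$, and if the initial datum is small enough, namely $Y_0\leq A^{-1/(\eta-1)}\,b^{-1/(\eta-1)^2}$, then $Y_n\leq b^{-n/(\eta-1)}Y_0\to 0$. This is proved by a one-line induction: assuming $Y_n\leq b^{-n/(\eta-1)}Y_0$, the recursion and the identity $n-\tfrac{n\eta}{\eta-1}=-\tfrac{n}{\eta-1}$ reduce the inductive step to exactly the smallness condition on $Y_0$.

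It remains to choose $d$ so that the smallness condition holds, and here I would take equality: imposing $A\,Y_0^{\eta-1}\,b^{1/(\eta-1)}=1$ with $Y_0=\omega(k_0,1)$ and solving for $d^{\rho}$ yields, using $(\rho+\sigma)\big(1+\tfrac{1}{\eta-1}\big)=\tfrac{\eta(\rho+\sigma)}{\eta-1}$,
\[
d^{\rho}=\frac{M\,2^{\frac{\eta(\rho+\sigma)}{\eta-1}}\,\omega(k_0,1)^{\eta-1}}{(1-r)^{\sigma}},
\]
which is precisely the stated value of $d$. With this choice $Y_n\to 0$, and since $\omega$ is nonincreasing in $h$ and nondecreasing in $r$ one has $0\leq \omega(k_0+d,r)\leq \omega(k_n,R_n)=Y_n$ for every $n$; letting $n\to\infty$ gives $\omega(k_0+d,r)=0$. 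For $k_0=0$, under which the displayed formula for $d$ and the stated conclusion are mutually consistent, this is exactly $\omega(d,r)=0$.

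The computation is routine; the only genuine points of care are the bookkeeping that makes both geometric factors collapse into a single base-$b$ power $b^{n}$ --- which forces the specific pairing of $k_n$ against $R_n$ and the common ratio $2$ --- and the final monotonicity step, which is what converts pointwise decay of the iterates into the vanishing of $\omega$ at the limiting level and radius. I expect no real obstacle beyond verifying that the equality defining $d$ reproduces the stated exponent $\eta(\rho+\sigma)/(\eta-1)$.
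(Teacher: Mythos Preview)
The paper does not actually supply a proof of this lemma; it merely records the statement as ``a local version of a lemma by Stampacchia'' and refers to \cite{S}, so there is no paper proof to compare against. Your argument is the standard De Giorgi--Stampacchia dyadic iteration and is correct: the coupled sequences $k_n=k_0+d(1-2^{-n})$, $R_n=r+(1-r)2^{-n}$ reduce the hypothesis to a recursion $Y_{n+1}\le A\,b^{\,n}Y_n^{\eta}$, and the classical geometric-decay lemma together with the displayed choice of $d$ forces $Y_n\to 0$.

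Your closing remark pinpoints the only real wrinkle: the iteration yields $\omega(k_0+d,r)=0$, not $\omega(d,r)=0$. This is a minor imprecision in the lemma as stated rather than a gap in your proof; in the paper's application (Step~2 of the proof of Lemma~\ref{pos}) the conclusion is used exactly in the form ``$|A_{h,r}|=0$ for $h$ larger than some constant $C_\rho$'', which matches $\omega(k_0+d,r)=0$.
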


In order to simplify notations, we henceforth write $a(\D u_n)$ instead of $a(t, x, \D u_n)$ and $H(u_n,\D u_n)$ instead of $H(t,x, u_n,\D u_n)$.

\begin{proof}[{\sl Proof of Lemma \ref{pos}}]
We divide the proof into a few steps. 

{\bf Step 1.} There is no loss in generality in assuming that the constant $B$ which appears in \eqref{operH1} satisfies $B>\max(\al,p-1)$. 

We use $v= - u_{n}^{-B}\psi$ in \rife{pronweak}, for any nonnegative $\psi(t,x)\in L^p(0,T;W^{1,p}_0(\Om))\cap L^\infty(Q)$ which is zero in a neighborhood of $(0,T)\times \partial\Om$, in order to obtain
\begin{multline*}
- \int_{0}^{T}\langle u_{n}', u_{n}^{-B}\psi\rangle + B\int_{Q} a(\nabla u_{n})\cdot\nabla u_n u_{n}^{-B-1}\psi+\\ - \int_{Q} a(\nabla u_{n})\cdot\nabla \psi\, u_{n}^{-B} -\int_{Q} H(u_n,\nabla u_n)\,u_{n}^{-B}\psi=0\,,
\end{multline*}
from which, taking into account the assumptions \eqref{oper1}, \eqref{operH1} and $B> \alpha$, one obtains
$$
- \int_{0}^{T}\langle u_{n}', u_{n}^{-B}\psi\rangle - \int_{Q} a(\nabla u_{n})\cdot\nabla \psi\, u_{n}^{-B} \leq 0\,.
$$
Therefore, if we set $u_{n}=\frac{B+1-p}{p-1}w_{n}^{-\frac{p-1}{B+1-p}}$ and $\gamma=\frac{(p-1)(B-1)}{B+1-p}$, we have
\begin{equation}\label{wn3}
\int_{0}^{T}\langle w_{n}', w_{n}^{\ga-1}\psi\rangle - C\,\int_{Q} a(-w_n^{-\frac{B}{B+1-p}}\nabla w_{n})\cdot\nabla \psi\, w_{n}^{\frac{B(p-1)}{B+1-p}} \leq 0\,,
\end{equation}
for every nonnegative $\psi(t,x)\in L^p(0,T;W^{1,p}_0(\Om))\cap L^\infty(Q)$ which is zero in a neighborhood of $(0,T)\times \partial\Om$ and for some positive constant $C$ depending only on $B$ and $p$.
Observe that $B>p-1$ and $p>2$ imply that $\gamma>p-1>1$. Moreover, observe that 
$$
w_{n}(t,x)= c\,n^{\frac{B+1-p}{p-1}} \ \ \text{on} \ (0,T)\times\partial\Omega, \ \ \, w_{n}(0,x)=c\,\Big(u_{0}+\frac1n\Big)^{-\frac{B+1-p}{p-1}}=:w_{0n}\,,
$$
for some positive constant $c$. In particular, since $u_{0}$ is bounded away from zero, then $w_{0n}$ is bounded in $L^{\infty}(\Omega)$ and the values of $w_{n}$ blow up on the boundary as $n$ goes to infinity. We look for an a priori local bound on the $L^{\infty}$ norm of $w_{n}$.

{\bf Step 2.} Local $L^{\infty}$ bound for $w_{n}$.

Without loss of generality we assume that $0\in \Omega$; we will prove that the bound holds true in a ball $B_{\rho}$ centered at zero of radius $\rho$ with $0<r<\rho<R\leq 1$, then a standard covering argument will allow us to conclude.

We fix $k>\|w_{0n}\|_{L^{\infty}(B_{R})}$, and we define the sets
$$
A_{k,\rho}(t)=\{x\in B_{\rho}: w_{n}(t,x)> k\}\,,
\quad
A_{k,\rho}=\{(t,x)\in (0,T)\times B_{\rho}: w_{n}(t,x)> k\}.
$$

We consider a cut-off function $\eta(x)\in C^\infty_{\rm c}(B_R)$ such that 
$$
 0\leq \eta(x)\leq 1\,,\quad
 \eta(x) \equiv 1 \ \hbox{in $B_r$}\,,
 \quad
 |\nabla \eta|\leq \frac{c}{R-r}\,,
$$ 
and use $\psi(t,x) = G_{k}(w_{n}(t,x))\varphi^{\delta}(x)$ as test function in \rife{wn3}, where
$\delta=\frac{p(\gamma+1)}{\gamma-p+1}$\,. Integrating between $0$ and $t<T$, and using the assumptions \eqref{oper1} and \eqref{oper2}, we obtain
\begin{multline*}
\int_{0}^{t}\langle w_{n}',w_{n}^{\gamma-1}G_{k}(w_{n})\varphi^{\delta}\rangle +\int_{0}^{t}\!\!\int_{A_{k,R}(\tau)}|\nabla G_{k}(w_{n})|^{p}\varphi^{\delta}\\ \leq \frac{C}{R-r}\int_{0}^{t}\!\!\int_{A_{k,R}(\tau)}|\nabla G_{k}(w_{n})|^{p-1}\varphi^{\delta-1}G_{k}(w_{n})\,.
\end{multline*}
We use Young's inequality in order to absorb the term $|\nabla G_{k}(w_{n})|^{p-1}$ in the right hand side; using the definition of $\delta$, we get
$$
\int_{0}^{t}\langle w_{n}',w_{n}^{\gamma-1}G_{k}(w_{n})\varphi^{\delta}\rangle
+
\int_{0}^{t}\!\!\int_{A_{k,R}(\tau)}|\nabla G_{k}(w_{n})|^{p}\varphi^{\delta}
\leq
\frac{C}{(R-r)^{p}}\int_{A_{k,R}}G_{k}(w_{n})^{p}\varphi^{\frac{p^{2}}{\gamma-p+1}}\,.
$$
Notice that, since $0\leq \varphi\leq 1$,
\begin{multline*}
 |\nabla \big(G_{k}(w_{n})\varphi^{\delta}\big)|^{p} \leq C\left(|\nabla G_{k}(w_{n})|^{p}\varphi^{\delta p} + \frac{G_{k}(w_{n})^{p}\varphi^{\delta p - p}}{(R-r)^{p}}\right) \\ \leq C\left(|\nabla G_{k}(w_{n})|^{p}\varphi^{\delta } + \frac{G_{k}(w_{n})^{p}\varphi^{\delta - p}}{(R-r)^{p}}\right)\,, 
 \end{multline*}
so that, observing that $\delta - p= \frac{p^{2}}{\gamma-p+1}$, we finally get 
$$
\int_{0}^{t}\langle w_{n}',w_{n}^{\gamma-1}G_{k}(w_{n})\varphi^{\delta}\rangle
+
\int_{0}^{t}\!\!\int_{A_{k,R}(\tau)}|\nabla G_{k}(w_{n})\varphi^{\delta} |^{p}
\leq
\frac{C}{(R-r)^{p}}\int_{A_{k,R}}G_{k}(w_{n})^{p}\varphi^{\frac{p^{2}}{\gamma-p+1}}\,.
$$
Since $\gamma>p-1$, we can use again Young's inequality to have
\begin{equation}\label{plug} 
\begin{split}
\dys \int_{0}^{t}\langle w_{n}',&w_{n}^{\gamma-1}G_{k}(w_{n})\varphi^{\delta}\rangle +\int_{0}^{t}\!\!\int_{A_{k,R}(\tau)}|\nabla \big(G_{k}(w_{n})\varphi^{\delta}\big) |^{p}\\[10pt] 
\dys &\leq \frac{1}{2(\gamma+1) T}\int_{A_{k,R}}\!\!G_{k}(w_{n})^{\gamma+1}\varphi^{\delta}+ \frac{C}{(R-r)^{\frac{(\gamma+1)p}{\gamma+1 - p}}}|A_{k,R}|\\[10pt] 
\dys &\leq \frac{1}{2(\gamma+1)}\ \sup_{t}\int_{A_{k,R}(t)}\!\!G_{k}(w_{n})^{\gamma+1} \varphi^{\delta}\,dx+ \frac{C}{(R-r)^{\frac{(\gamma+1)p}{\gamma+1 - p}}}|A_{k,R}|\,.
\end{split}
\end{equation}
Now we deal with the time derivative part. Let us define, for $s\geq 0$,
$$
\Psi_{k}(s)=\int_{0}^{s}G_{k}(\sigma)\sigma^{\gamma-1}\ d\sigma.
$$
Then it is easy to check that 
$$
 \Psi_{k}(s)\geq \frac{1}{\ga+1}G_{k}(s)^{\gamma+1}\,.
$$
that $k>\|w_{0}\|_{L^{\infty}(B_{R})}$, we obtain
$$
 \int_{0}^{t}\langle w_{n}',w_{n}^{\gamma -1}G_{k}(w_{n})\varphi^{\delta}\rangle =\int_{B_{R}}\Psi_k(w_{n}(t,x))\varphi^{\delta}\geq \frac{1}{\gamma+1}\int_{B_{R}}G_{k}(w_{n})^{\gamma+1}\varphi^{\delta}. 
$$
Therefore we can use \rife{plug} in order to deduce 
$$
\frac{1}{\gamma+1}\int_{B_{R}}G_{k}(w_{n})^{\gamma+1}\varphi^{\delta}\leq\frac{1}{2(\gamma+1)}\sup_{t}\int_{A_{k,R}}G_{k}(w_{n})^{\gamma+1} \varphi^{\delta}+ \frac{C}{(R-r)^{\frac{(\gamma+1)p}{\gamma+1 - p}}}|A_{k,R}|
$$
and we can take the supremum over $t\in (0,T)$ on the left in order to get 
$$
\sup_{t}\int_{A_{k,R}(t)}G_{k}(w_{n})^{\gamma+1} \varphi^{\delta}\leq \frac{C}{(R-r)^{\frac{(\gamma+1)p}{\gamma+1 - p}}}\,|A_{k,R}|. 
$$
Gathering together all these facts, and using again that $\varphi^{\delta}\geq \varphi^{\delta(\gamma+1)}$, we end up with the following estimate
$$
\dys \sup_{t}\int_{A_{k,R}(t)}\!\!\big(G_{k}(w_{n}) \varphi^{\delta}\big)^{\gamma+1}+\int_{A_{k,R}}\!\!|\nabla \big(G_{k}(w_{n})\varphi^{\delta}\big) |^{p} \dys\leq \frac{C}{(R-r)^{\frac{(\gamma+1)p}{\gamma+1 - p}}}|A_{k,R}|\,.
$$
We are now in the position to apply the Gagliardo-Nirenberg inequality \rife{gnc} to the function $G_{k}(w_{n}) \varphi^{\delta}$, with $\beta=\gamma+1$; recalling that $\varphi\equiv 1$ on $B_{r}$, we obtain
$$
\int_{A_{k,r}} G_{k}(w_{n})^{p\frac{N+\gamma+1}{N}}\leq \frac{C}{(R-r)^{\frac{(\gamma+1)p(N+p)}{(\gamma+1 - p)N}}}|A_{k,R}|^{1+
\frac{p}{N}}. 
$$
Stampacchia's procedure is now quite standard. For $h>k$, one obtains
$$
\int_{A_{k,r}} G_{k}(w_{n})^{p\frac{N+\gamma+1}{N}}\geq \int_{A_{h,r}} G_{h}(w_{n})^{p\frac{N+\gamma+1}{N}}\geq (h-k)^{p\frac{N+\gamma+1}{N}}|A_{h,r}|\,,
$$
that is,
$$
|A_{h,r}|\leq \frac{C \dys |A_{k,R}|^{1+\frac{p}{N}}}{(h-k)^{p\frac{N+\gamma+1}{N}}(R-r)^{\frac{p(\gamma+1)(N+p)}{(\gamma+1 - p)N}}}. 
$$
Therefore, if we choose $\omega(h,r)=|A_{h,r}|$, we can apply Lemma \ref{locals} in order to deduce that, for every fixed $\rho\in (r,R)$, 
$|A_{h,r}|=0$ if $h$ is larger than some constant $C_\rho$. It follows that 
\begin{equation}\label{boundw}
w_{n}\leq C_{\rho}, \ \ \ \text{a.e. on $(0,T)\times B_{\rho}$,\quad for every $n$. }
\end{equation}

{\bf Step 3.} End of the proof.
Recalling that $B>p-1$ and the definition of $w_{n}$ we use \rife{boundw} to have, a.e. on $(0,T)\times B_{\rho}$
$$
u_{n}= \frac{B+1-p}{p-1} w_{n}^{-\frac{p-1}{B+1-p}}\geq \frac{B+1-p}{p-1} C_{\rho}^{-\frac{p-1}{B+1-p}}\doteq c_{\rho}>0. 
$$
As we said, by means of a standard covering procedure this estimate can be proven to hold on any set of the form $(0,T) \times \omega$, with $\omega \subset\subset \Omega$.
\end{proof}

\medskip

\subsection{Passing to the limit}

In order to pass to the limit as $n$ tends to infinity, we need the following result.
\begin{proposition}\label{str}\sl
We have
$$
u_{n}\longrightarrow u, \qquad \mbox{strongly in $L^{p}(0,T;W^{1,p}(\omega))$,}
$$
for every open set $\omega\subset\subset\Omega$.
\end{proposition}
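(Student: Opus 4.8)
The plan is to establish the local strong convergence of the gradients via the classical Landes–Boccardo–Murat scheme. Since $\|u_n\|_{L^\infty(Q)}\le C$ and $\nabla u_n\to\nabla u$ a.e.\ on $Q$ by Lemma~\ref{base}, we already have $u_n\to u$ strongly in $L^q(Q)$ for every $q<\infty$, so it suffices to prove that $\nabla u_n\to\nabla u$ strongly in $L^p((0,T)\times\omega)$. Fix $\omega\subset\subset\omega'\subset\subset\Omega$ and a cut–off $\xi\in C^\infty_{\rm c}(\Omega)$ with $0\le\xi\le 1$, $\xi\equiv 1$ on $\omega$, $\operatorname{supp}\xi\subset\omega'$, and recall from Lemma~\ref{pos} that $u_n\ge c_{\omega'}>0$ on $(0,T)\times\omega'$ for all $n$. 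By the structure conditions \eqref{oper1}--\eqref{oper3} together with a routine Vitali/equi-integrability argument (using the uniform $L^p$ bound on $\nabla u_n$, the growth \eqref{oper2}, and the a.e.\ convergence of the gradients), the statement reduces to showing
\[
\int_{(0,T)\times\omega}\bigl(a(t,x,\nabla u_n)-a(t,x,\nabla u)\bigr)\cdot(\nabla u_n-\nabla u)\ \longrightarrow\ 0 .
\]

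To build an admissible test function I would regularize $u$ in time à la Landes, letting $v_\mu$ solve $v_\mu'=\mu(u-v_\mu)$ with $v_\mu(0)=u_0$, so that $v_\mu\to u$ strongly in $L^p(0,T;W^{1,p}_0(\Omega))$ and a.e., $v_\mu'\in L^p(0,T;W^{1,p}_0(\Omega))$, and $\|v_\mu\|_{L^\infty(Q)}\le C$; in particular $u_n-v_\mu$ stays bounded in $L^\infty(Q)$ uniformly in $n,\mu$. For a small parameter $j>0$ I would use $\varphi_{n,\mu,j}=T_j(u_n-v_\mu)\,\xi^p$ in \eqref{pronweak} and analyze the three resulting terms. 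After writing $u_n'=(u_n-v_\mu)'+v_\mu'$, the parabolic term produces $\int_\Omega\Theta_j\bigl((u_n-v_\mu)(T)\bigr)\xi^p-\int_\Omega\Theta_j(1/n)\xi^p+\mu\int_Q(u-v_\mu)T_j(u_n-v_\mu)\xi^p$ with $\Theta_j(s)=\int_0^sT_j$ convex and nonnegative; letting $n\to\infty$ (using $u_n\to u$ in $L^p(Q)$) its liminf is $\ge\mu\int_Q(u-v_\mu)T_j(u-v_\mu)\xi^p\ge0$, so it may be discarded with the correct sign. In the diffusion term, $\nabla\varphi_{n,\mu,j}$ splits into $\xi^p\,\mathbf 1_{\{|u_n-v_\mu|<j\}}(\nabla u_n-\nabla v_\mu)$ plus a term carrying $\nabla\xi$; the latter tends to $0$ as $n\to\infty$ (using $a(t,x,\nabla u_n)\rightharpoonup a(t,x,\nabla u)$ weakly in $L^{p'}(Q)$, which follows from the a.e.\ convergence of the gradients and \eqref{oper2}) and then $\mu\to\infty$, while from the former I isolate the nonnegative quantity $\int_{\{|u_n-v_\mu|<j\}}\bigl(a(t,x,\nabla u_n)-a(t,x,\nabla v_\mu)\bigr)\cdot(\nabla u_n-\nabla v_\mu)\,\xi^p$, the leftover piece $\int a(t,x,\nabla v_\mu)\cdot(\nabla u_n-\nabla v_\mu)\xi^p\mathbf 1_{\{|u_n-v_\mu|<j\}}$ again vanishing as $n\to\infty$ then $\mu\to\infty$.

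The crucial term is the singular one, $\int_Q H(t,x,u_n,\nabla u_n)\,T_j(u_n-v_\mu)\,\xi^p$, and this is where I expect the main obstacle to lie, because $H(t,x,u_n,\nabla u_n)$ is only bounded, not equi-integrable, in $L^1_{\rm loc}(Q)$, so it cannot be passed to the limit by a naive argument. The point is to use \emph{all} of: $H\ge0$ from \eqref{operH1}; $|T_j(u_n-v_\mu)|\le j$; the fact that $\xi$ is supported in $\omega'$, where $u_n\ge c_{\omega'}$ (Lemma~\ref{pos}), so that $H(t,x,u_n,\nabla u_n)\le(B/c_{\omega'})|\nabla u_n|^p$ there; and the uniform bound \eqref{abs}. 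This gives $\bigl|\int_Q H(t,x,u_n,\nabla u_n)T_j(u_n-v_\mu)\xi^p\bigr|\le j\int_{(0,T)\times\omega'}H(t,x,u_n,\nabla u_n)\le Cj$, i.e.\ the singular contribution is spent against the truncation level. Collecting the three estimates yields
\[
\limsup_{\mu\to\infty}\ \limsup_{n\to\infty}\ \int_{\{|u_n-v_\mu|<j\}}\bigl(a(t,x,\nabla u_n)-a(t,x,\nabla v_\mu)\bigr)\cdot(\nabla u_n-\nabla v_\mu)\,\xi^p\ \le\ Cj .
\]

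Finally I would convert this into the desired limit by splitting $(0,T)\times\omega$ into $\{|u_n-v_\mu|<j\}$ and its complement. On the first set one adds and subtracts $a(t,x,\nabla u)$ and $\nabla u$, absorbing the errors by $\|\nabla v_\mu-\nabla u\|_{L^p(Q)}\to0$ and $\|a(t,x,\nabla v_\mu)-a(t,x,\nabla u)\|_{L^{p'}(Q)}\to0$ (Krasnoselskii, since $\nabla v_\mu\to\nabla u$ in $L^p$). On the complement, whose measure tends to $0$ as $n\to\infty$ and then $\mu\to\infty$ (because $u_n-v_\mu\to0$ in $L^1(Q)$), one controls $\int_{\{|u_n-v_\mu|\ge j\}\cap(0,T)\times\omega}\bigl(a(t,x,\nabla u_n)-a(t,x,\nabla u)\bigr)\cdot(\nabla u_n-\nabla u)$ using the a.e.\ convergence of $\nabla u_n$ from Lemma~\ref{base} together with the uniform $L^\infty$ bound on $u_n$ (this is the point at which the $p\ge 2$, $L^\infty$ setting is genuinely used to avoid any residual concentration inside $\Omega$). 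Since $j>0$ is arbitrary, this gives $\int_{(0,T)\times\omega}\bigl(a(t,x,\nabla u_n)-a(t,x,\nabla u)\bigr)\cdot(\nabla u_n-\nabla u)\to0$, and hence, through the reduction of the first paragraph, the strong convergence $u_n\to u$ in $L^p(0,T;W^{1,p}(\omega))$.
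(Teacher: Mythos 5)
Your overall scheme (reduce to $\int_{(0,T)\times\omega}\bigl(a(t,x,\nabla u_n)-a(t,x,\nabla u)\bigr)\cdot(\nabla u_n-\nabla u)\to 0$, Landes regularization in time, localization by a cut-off on $\omega'\supset\supset\omega$ where Lemma \ref{pos} gives $u_n\ge c_{\omega'}$) is the right one and matches the paper up to the choice of test function. The genuine gap is in the last step, the complement $\{|u_n-v_\mu|\ge j\}$. By choosing $T_j(u_n-v_\mu)\xi^p$ with $j$ small you obtain the monotonicity estimate only on the set $\{|u_n-v_\mu|<j\}$, at the price $Cj$ paid by the singular term via \eqref{abs}. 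On the complement you only know that its measure tends to $0$ (as $n\to\infty$, then $\mu\to\infty$) and that the nonnegative integrand $e_n=\bigl(a(\nabla u_n)-a(\nabla u)\bigr)\cdot(\nabla u_n-\nabla u)$ is bounded in $L^1$ and tends to $0$ a.e. That is not enough: a nonnegative sequence bounded in $L^1$ can concentrate all of its mass on sets of vanishing measure (think of $n\chi_{[0,1/n]}$), and ruling this out is exactly the equi-integrability of $|\nabla u_n|^p$ that the proposition is meant to establish — so the argument is circular at this point. Neither the a.e.\ convergence of $\nabla u_n$ nor the $L^\infty$ bound on $u_n$ controls the gradient energy sitting on $\{|u_n-v_\mu|\ge j\}$; and you cannot instead take $j$ large enough to empty the complement (e.g.\ $j>2\|u\|_{L^\infty(Q)}$), because then the bound $Cj$ on the singular term is no longer small. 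In short, with a bare truncation the natural-growth term and the complement cannot both be made harmless.

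The paper resolves precisely this tension by replacing $T_j$ with the Boccardo--Murat--Puel exponential function $\varphi_\lambda(s)=s\,\mathrm{e}^{\lambda s^2}$: testing \eqref{pronweak} with $\varphi_\lambda(u_n-u_\nu)\phi$ and using $H(u_n,\nabla u_n)\le (B/c_\omega)|\nabla u_n|^p$ on $\operatorname{supp}\phi$, one arrives at
\begin{equation*}
\int_{Q}\bigl[a(\nabla u_n)-a(\nabla u_\nu)\bigr]\cdot\nabla(u_n-u_\nu)\,
\Bigl[\varphi_\lambda'(u_n-u_\nu)-\tfrac{B}{\alpha c_\omega}\,|\varphi_\lambda(u_n-u_\nu)|\Bigr]\phi\le\eps(\nu,n)\,,
\end{equation*}
and choosing $\lambda$ so large that $\varphi_\lambda'(s)-\tfrac{B}{\alpha c_\omega}|\varphi_\lambda(s)|\ge\tfrac12$ for all $s$ absorbs the singular term while keeping the monotonicity quantity on the \emph{whole} localized cylinder, with no residual set to estimate. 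If you want to salvage your write-up, replace $T_j(u_n-v_\mu)$ by $\varphi_\lambda(u_n-v_\mu)$ (the time-derivative and $\nabla\xi$ terms are handled exactly as you did); the rest of your argument then goes through.
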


\begin{proof}[{\sl Proof}.]

The sequence $\{u_n'\}$ is bounded in
$L^{p'}(0,T;W^{-1,p'}(\Omega))+L^1(Q)$. Using the Aubin-Simon
compactness argument (see Corollary~4 in \cite{S}) we deduce that, up to a subsequence,
$$
u_n\rightarrow u \mbox{ in } L^p(Q)\,,
$$
for some $u$ in $L^{p}(0,T;W^{1,p}_{0}(\Omega)) \cap L^{\infty}(Q)$.
We will prove that, for every open set $\omega\subset\subset\Omega$,
\begin{equation}
 \label{convi}
u_n \rightarrow u \quad \mbox{in } L^p(0,T; W^{1,p}(\omega)).
\end{equation}

We now introduce a 
classical regularization $u_{\nu}$ of the 
function $u$
with respect to time (see \cite{LA}). For
every $\nu\in \enne$, we define $u_\nu$ as the solution of the
Cauchy problem
$$
\left\{
\begin{array}{cl}
\dys {\frac1\nu}u_\nu'+u_\nu = u\,,\\
\\
u_\nu(0)=u_{0,\nu}\,,\\
\end{array}
\right.
$$
where $u_{0,\nu}$ belongs to $W^{1,p}_{0}(\Om) \cap L^{\infty}(\Om)$
and satisfies
$$
 u_{0, \nu}\to u_0 \quad\text{strongly in
 $L^1(\Omega)$ and $*$-weakly in $L^\infty(\Omega)$ ,}\quad
 \lim_{\nu\to +\infty}\frac{1}{\nu}\Vert u_{0,\nu}\Vert_{W^{1,p}_{0}(\Om)}
 =0\,.
$$
Then one has (see
\cite{LA}):
$$
 u_\nu \in L^p(0,T;W^{1,p}_0(\Om))\,\qquad
 u_\nu' \in L^p(0,T;W^{1,p}_0(\Om))\,,
$$
$$ \Vert
u_\nu\Vert_{L^\infty(Q)}\le\Vert u\Vert_{L^\infty(Q)}\,,
$$
and, as $\nu$ tends to infinity,
\begin{equation}\label{strongunu}
u_\nu \to u\quad \hbox{strongly in
}L^p(0,T;W^{1,p}_0(\Om)).
\end{equation}

\noindent Let $\vp_{\la } (s)= s {\rm e}^{\la s^2}$ (with $\la$ to be chosen later). We will denote by $\eps(\nu,n)$ any 
quantity such that
\begin{equation*}
\begin{array}{l}
\dys \lim_{\nu\to+\infty}
\limsup_{n\to +\infty}|\eps(\nu,n)|=0.
\end{array}
\end{equation*}
\noindent For $0\leq \phi \in C^{\infty}_{\rm c} (\Omega)$ we have that
\begin{equation}
 \label{dus}
\int_0^T \langle u_n',\varphi_\lambda(u_n-u_\nu)\phi
\rangle\geq\eps(\nu,n)\,,
\end{equation}
(see \cite{do,lp}).

\medskip

Now, using \eqref{dus} and $\vp_{\la} (u_n-u_\nu) \phi $ as test
function in \eqref{pronweak}, we obtain
\begin{multline*}
\dys \int_{Q} a(\nabla u_n)\cdot \D (u_n-u_\nu) \vp_{\la}
'(u_n-u_\nu) \phi +\dys\int_{Q} a(\nabla u_n) \cdot \D \phi\,
\vp_{\la} (u_n-u_\nu)
\dys \\
+\dys\int_{Q} H(u_n,\D u_n) \vp_{\la} (u_n-u_\nu) \phi \leq -\eps(\nu,n).
\end{multline*}

Moreover, if $\omega\subset\subset \Omega$ is such that
$\mbox{supp\,} \phi \subset\omega$, since $u_n \to u$ weakly in
$L^p(0,T;W^{1,p}_{0} (\Omega))$ and $\varphi_\lambda(u_n-u_\nu)$ converges
to $\varphi_\lambda(u-u_\nu)$ $\ast$-weakly in
$L^{\infty}(Q)$, we have
$$
 \int_{Q} a(\nabla u_n)
\cdot \D \phi \, \vp_{\la} (u_n-u_\nu)\, = \eps (\nu,n)\,.
$$
If $c_{\omega}$ is the constant given by Lemma \ref{pos}, we have, recalling that ${\rm supp}\, \phi \subset\omega$,
\begin{multline*}
\dys\left|\int_{Q} H(u_n,\D u_n) \vp_{\la} (u_n-u_\nu) \phi\right| \dys \leq
B\,\int_{\omega\times(0,T)} \frac{|\nabla u_n|^p}{u_{n}} |\vp_{\la} (u_n-u_\nu) |\phi \\ \dys\leq 
\frac{B}{c_{\omega}}\int_ {Q} |\nabla u_n|^p |\vp_{\la} (u_n-u_\nu) |\phi
\, .
\end{multline*}
Thus,
\begin{equation} \label{ccua} 
\int_{Q} a(\nabla u_n) \cdot \D (u_n-u_\nu) \vp_{\la}'(u_n-u_\nu) \phi
-
\frac{B}{c_{\omega}} \int_{Q} |\nabla u_n|^p |\vp_{\la} (u_n-u_\nu) |\phi
\leq
\eps(\nu,n)\,.
\end{equation}
Then we can write
\begin{multline*}
\int_Q a(\nabla u_n) \cdot \D (u_n-u_\nu) \vp_{\la}'(u_n-u_\nu) \phi
=
\int_Q \big[a(\nabla u_n) - a(\nabla u_\nu)\big] \cdot \D (u_n-u_\nu) \vp_{\la}'(u_n-u_\nu) \phi
\\
+
\int_Q a(\nabla u_\nu) \cdot \D (u_n-u_\nu) \vp_{\la}'(u_n-u_\nu) \phi
\\
=
\int_Q \big[a(\nabla u_n) - a(\nabla u_\nu)\big] \cdot \D (u_n-u_\nu) \vp_{\la}'(u_n-u_\nu) \phi 
+
\eps(\nu,n)\,.
\end{multline*}
Similarly,
\begin{multline*}
\dys\int_{Q} |\nabla u_n|^p |\vp_{\la} (u_n-u_\nu) |\,\phi
\leq \al^{-1}
\int_{Q} a(\D u_n)\cdot \D u_n\, |\vp_{\la} (u_n-u_\nu) |\,\phi 
\\
=
\al^{-1}
\int_{Q} \big[a(\D u_n)- a(\D u_\nu)\big]\cdot \D (u_n-u_\nu)\, |\vp_{\la} (u_n-u_\nu) |\,\phi
\\
+
\int_{Q} a(\D u_\nu)\cdot \D (u_n-u_\nu)\, |\vp_{\la} (u_n-u_\nu) |\,\phi
+ 
\int_{Q} a(\D u_n)\cdot \D u_\nu\, |\vp_{\la} (u_n-u_\nu) |\,\phi
\\
=
\al^{-1}
\int_{Q} \big[a(\D u_n)- a(\D u_\nu)\big]\cdot \D (u_n-u_\nu)\, |\vp_{\la} (u_n-u_\nu) |\,\phi
+\eps(\nu,n)\,.
\end{multline*}
Therefore, from \eqref{ccua} we obtain
\begin{multline*}
\int_{Q} \big[a(\D u_n)- a(\D u_\nu)\big]\cdot \D (u_n-u_\nu)\, \Big[\vp'_{\la} (u_n-u_\nu) -\frac{B}{\al c_{\omega}}|\vp_{\la} (u_n-u_\nu) |\Big]\,\phi\leq \eps(\nu,n)\,.
\end{multline*}
Choosing $\la$ large enough so that $ \dys
\vp_{\la }' (s)- \frac{B}{\al c_{\omega}} |\vp_{\la } (s)| \geq \frac{1}{2}$ for every
$s\in\mathbb{R}$, we deduce that
$$
\int_{Q} \big[a(\D u_n)- a(\D u_\nu)\big]\cdot \D (u_n-u_\nu)\,\phi\leq \eps(\nu,n)\,.
$$
From here it is standard (see for example \cite{bmp}) to prove that $u_{n} - u_{\nu}$ tends to zero strongly in $L^{p}(0,T;W^{1,p}(\omega))$. Recalling \eqref{strongunu}, we thus have that \eqref{convi} holds.
\end{proof}

Using Proposition \ref{str} we can prove the (local) strong convergence of the lower order terms.
\begin{lemma}\label{strlower}\sl
We have
$$
H(u_{n},\D u_{n}) \to H(u,\D u)\,,
\qquad
\mbox{locally strongly in $L^{1}(Q)$.}
$$
\end{lemma}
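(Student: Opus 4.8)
The plan is to deduce the statement from a localized generalized dominated convergence argument, whose main input is the strong gradient convergence of Proposition~\ref{str}. I would fix an open set $\omega\subset\subset\Omega$ and let $c_\omega>0$ be the constant from Lemma~\ref{pos}, so that $u_n\geq c_\omega$ on $(0,T)\times\omega$ for every $n$ and, passing to the a.e.\ limit, also $u\geq c_\omega$ there; in particular $H(u,\nabla u)$ is well defined on $(0,T)\times\omega$.

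First I would identify the pointwise limit: since $H$ is a Carath\'eodory function, hence continuous in $(s,\xi)$, and, by Lemma~\ref{base}, $u_n\to u$ and $\nabla u_n\to\nabla u$ a.e.\ on $Q$, one gets $H(u_n,\nabla u_n)\to H(u,\nabla u)$ a.e.\ on $Q$. Then I would produce a dominating sequence on $(0,T)\times\omega$: by \eqref{operH1} together with $u_n\geq c_\omega$,
$$
0\leq H(u_n,\nabla u_n)\leq \frac{B}{c_\omega}\,|\nabla u_n|^p=:g_n\,,
$$
and the a.e.\ limit of this inequality also gives $H(u,\nabla u)\leq \frac{B}{c_\omega}|\nabla u|^p=:g$ a.e.\ on $(0,T)\times\omega$.

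The crucial observation is that the dominants $g_n$ converge in $L^1$, not merely pointwise or in a bounded fashion: Proposition~\ref{str} yields $\nabla u_n\to\nabla u$ strongly in $L^p((0,T)\times\omega)$, and since the map $v\mapsto|v|^p$ sends $L^p$ continuously into $L^1$ (use $\big||v_n|^p-|v|^p\big|\leq C(|v_n|^{p-1}+|v|^{p-1})\,|v_n-v|$ and H\"older's inequality), we get $g_n\to g$ strongly in $L^1((0,T)\times\omega)$. At this point I would invoke the generalized Lebesgue dominated convergence theorem: from $H(u_n,\nabla u_n)\to H(u,\nabla u)$ a.e., $0\leq H(u_n,\nabla u_n)\leq g_n$, and $g_n\to g$ both a.e.\ and in $L^1((0,T)\times\omega)$, it follows that $H(u_n,\nabla u_n)\to H(u,\nabla u)$ strongly in $L^1((0,T)\times\omega)$; since $\omega\subset\subset\Omega$ is arbitrary, this is exactly the asserted local strong convergence in $L^1(Q)$. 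There is no genuine obstacle here once Proposition~\ref{str} is available; the only point requiring a little care is that the classical dominated convergence theorem does not apply directly, because the $g_n$ are not controlled by a single fixed integrable function, so one must use the version with a convergent dominating sequence.
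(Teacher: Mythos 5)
Your argument is correct and follows essentially the same route as the paper: both rest on the local lower bound of Lemma~\ref{pos}, the a.e.\ convergence of $u_n$ and $\nabla u_n$, and the strong local $L^p$ convergence of the gradients from Proposition~\ref{str}, transferred to $H(u_n,\nabla u_n)$ via a convergent dominating sequence (the paper phrases this as dominated convergence for $|\nabla u_n|^p/u_n$ followed by Vitali's theorem, you as the generalized dominated convergence theorem with dominant $\tfrac{B}{c_\omega}|\nabla u_n|^p$). The only point worth flagging is that you correctly identify why the classical dominated convergence theorem does not apply directly, which the paper glosses over.
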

\begin{proof}[{\sl Proof}.]
Gathering together the results of Theorem \ref{str}, Lemma \ref{base}, and Lemma \ref{pos}, we can apply Lebesgue's dominated convergence theorem to prove that
$$
\frac{|\D u_n|^p}{u_n} \to \frac{|\D u|^p}{u}\,,
\qquad
\mbox{locally strongly in $L^{1}(Q)$.}
$$
It is then straightforward to conclude using \eqref{operH1} and Vitali's theorem.
\end{proof}

Thanks to all the results proved so far we can pass to the limit in \rife{pron}, to have that $u$ is a solution of \eqref{pro} in the sense of Definition \ref{defin}, thus concluding the proof of Theorem \ref{pm2} in the degenerate case $p > 2$.

\begin{remark}\rm
We want to point out that since the same argument of the proof of Lemma \ref{pos} applies to supersolutions of problem \ref{pron}. Thus, in particular, one could prove the same result of Theorem \ref{pm2} also for nonhomogeneous problems like
$$
\left\{
\begin{array}{cl}
\dys 
u' - {\rm div} \, a(t,x,\D u) + H(t,x,u,\D u) = f &\mbox{in $Q$,} \\[1.5 ex]
\dys u(0,x)= u_0 (x) &\mbox{in $\Omega$,} \\[1.5 ex]
\dys u(t,x)=0 &\mbox{on $\Gamma$,}
\end{array}\right.
$$ 
with $0 \leq f$ in $L^{r}(0,T;L^{q}(\Omega))$, with
$$
\frac{p}{r} + \frac{N}{q} < p\,,
\quad
r \geq p'\,,
\quad
q > 1\,.
$$
\end{remark}

\section{Proof of Theorem \ref{pm2}: the case $p = 2$}\label{4}

As before, we start from the approximate problems \eqref{pron}, and the key result will be the strong positivity of $u_{n}$ on $(0,T) \times \omega$, with $\omega \subset\subset \Omega$. 
\begin{lemma}\label{pos2}\sl
Assume that \eqref{oper1}, \eqref{oper2}, \eqref{oper3} and \eqref{operH1} hold with $p=2$ and $B<\alpha$, and let $u_{n}$ be a weak solution of problem \rife{pron}. Then, for any $\omega\subset\subset \Omega$, there exists a constant $c_{\omega}$ such that
$$
u_{n}\geq c_{\omega}>0\,, \quad \mbox{in $(0,T) \times \omega$, for every $n$ in $\mathbb{N}$.}
$$
\end{lemma}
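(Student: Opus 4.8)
The plan is to follow the strategy of Lemma~\ref{pos}, deducing the lower bound on $u_n$ from an upper bound on a transform of $1/u_n$ which is a subsolution of a uniformly parabolic equation with $n$-independent structure; since $p=2$ the right transform is the logarithmic one. First I would use $v=-u_n^{-1}\psi$ as a test function in \eqref{pronweak}, with $\psi$ nonnegative, in $L^2(0,T;W^{1,2}_0(\Omega))\cap L^\infty(Q)$ and vanishing near $\Gamma$ (admissible for fixed $n$, since $u_n\ge 1/n$). Using \eqref{oper1}, \eqref{operH1} and the hypothesis $B<\alpha$ to treat the coercive and the lower order terms together, and setting $w_n:=-\log u_n$, one is led to
$$
\int_0^T\langle w_n',\psi\rangle+(\alpha-B)\int_Q|\nabla w_n|^2\,\psi+\int_Q b_n(t,x,\nabla w_n)\cdot\nabla\psi\le 0,
$$
where $b_n(t,x,\eta):=-u_n^{-1}a(t,x,-u_n\eta)$. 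A direct computation using \eqref{oper1}--\eqref{oper2} gives $b_n(t,x,\eta)\cdot\eta\ge\alpha|\eta|^2$ and $|b_n(t,x,\eta)|=u_n^{-1}|a(t,x,-u_n\eta)|\le u_n^{-1}\beta(u_n|\eta|)^{p-1}=\beta u_n^{p-2}|\eta|^{p-1}=\beta|\eta|$, the powers of $u_n$ cancelling \emph{precisely because $p=2$}; this is where degeneracy must be excluded, since for $p\ne2$ the transformed operator would carry unbounded weights. Moreover $w_n(0)=-\log(u_0+\tfrac1n)\le-\log c$ uniformly in $n$, while $w_n$ blows up (like $\log n$) on $\Gamma$. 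Thus, dropping the nonnegative second term, $w_n$ is, for each $n$, a local weak subsolution of a uniformly parabolic equation with $n$-independent ellipticity constants and initial datum bounded above uniformly in $n$.

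\textbf{Step 2 (uniform local estimates).} Next I would derive local a priori bounds, uniform in $n$. Choosing $\psi=\varphi^2$ with $\varphi\in C^\infty_{\rm c}(\Omega)$ a cut-off, integrating on $(0,t)$, and absorbing the contribution of $\nabla\varphi$ into the \emph{retained} coercive term by Young's inequality, one obtains for a.e.\ $t\in(0,T)$
$$
\int_\Omega w_n(t)\,\varphi^2+\frac{\alpha-B}{2}\int_0^t\!\!\int_\Omega|\nabla w_n|^2\varphi^2\le\int_\Omega w_n(0)\,\varphi^2+C\le C.
$$
Since $w_n\ge-\log\|u_n\|_{L^\infty(Q)}\ge-C$ by Lemma~\ref{base}, this yields $w_n$ bounded in $L^\infty(0,T;L^1(\omega))$ and $\nabla w_n$ bounded in $L^2((0,T)\times\omega)$, uniformly in $n$, for every $\omega\subset\subset\Omega$; by Poincaré's inequality $w_n$ is then bounded in $L^2((0,T)\times\omega)$ uniformly in $n$. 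This is the step that genuinely exploits $B<\alpha$: with the wrong sign of the lower order term there is no interior control on the size of $w_n$, in agreement with Proposition~\ref{none}.

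\textbf{Step 3 (local $L^\infty$ bound and conclusion).} Since $w_n$ is a subsolution of a uniformly parabolic equation with bounded measurable coefficients, with $w_n(0)\le-\log c$ and the uniform $L^2_{\rm loc}$ bound just obtained, the classical local boundedness estimates for subsolutions apply (De Giorgi--Nash--Moser; equivalently, one may repeat the De Giorgi iteration of Step~2 of the proof of Lemma~\ref{pos} with $p=2$, now using $\tfrac12 G_k(w_n)^2$ for the time term, the Gagliardo--Nirenberg inequality with $\beta=2$, and Lemma~\ref{locals}). This gives, for every $\omega\subset\subset\Omega$, an estimate $w_n\le C_\omega$ a.e.\ on $(0,T)\times\omega$ with $C_\omega$ independent of $n$ — the bound on $w_n(0)$ handling the slab near $t=0$, the interior estimate the rest. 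Going back to $u_n=e^{-w_n}$ this reads $u_n\ge e^{-C_\omega}=:c_\omega>0$ on $(0,T)\times\omega$, and a standard covering argument gives the statement for an arbitrary $\omega\subset\subset\Omega$.

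\textbf{Main obstacle.} The delicate point is the \emph{uniformity in $n$} throughout. In Lemma~\ref{pos} the choice $B>\max(\alpha,p-1)$ forced the exponent of the transform to satisfy $\gamma>p-1$, which made the right-hand side of the energy inequality linear in $|A_{k,R}|$ and allowed a one-shot Stampacchia iteration; here the natural exponent is borderline ($\gamma=1=p-1$), so the right-hand side only controls $\int\!\!\int_{A_{k,R}}G_k(w_n)^2$, and the De Giorgi iteration must be run in its classical form, simultaneously in the truncation level and the radius. The uniform $L^2_{\rm loc}$ bound of Step~2 is exactly what makes this go through: it lets one start the iteration at a level $k_0$ large enough that $|A_{k_0,R}|$ is as small as required, and it forces the resulting sup-bound to depend only on the ($n$-independent) ellipticity constants, on $\sup_n\|w_n(0)\|_{L^\infty}$, and on $\sup_n\|w_n\|_{L^2((0,T)\times\omega')}$.
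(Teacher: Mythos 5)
Your argument is correct, but it follows a genuinely different route from the one in the paper. The paper's proof multiplies the equation by $u_n^{-\theta}\psi$ with the \emph{exact} exponent $\theta=B/\alpha\in(0,1)$, so that the absorption term $H$ is completely cancelled by the part $\theta\,a(\nabla u_n)\cdot\nabla u_n\,u_n^{-\theta-1}\psi$ of the coercivity; the function $v_n=\bigl(u_n/(1-\theta)\bigr)^{1-\theta}$ is then a supersolution of a uniformly parabolic problem whose operator $\tilde a(t,x,s,\xi)=s^{-\frac{\theta}{1-\theta}}a(t,x,s^{\frac{\theta}{1-\theta}}\xi)$ inherits the constants $\alpha,\beta$ (again precisely because $p=2$), with initial datum bounded below by a positive constant independent of $n$ and nonnegative boundary values, and the uniform interior lower bound follows in one stroke from the classical maximum principle of \cite{LSU}. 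You instead take the borderline exponent $\theta=1$ (the logarithm), keep the leftover coercivity $(\alpha-B)\int_Q|\nabla w_n|^2\psi$ to derive a Caccioppoli-type estimate and hence a uniform $L^2_{\rm loc}$ bound on $w_n=-\log u_n$ (this is indeed where $B<\alpha$ is used in your scheme), and then prove a uniform local $L^\infty$ upper bound for the subsolution $w_n$ by De Giorgi--Nash--Moser, i.e.\ by rerunning the iteration of Lemma \ref{pos} in its classical two-parameter form since $\gamma+1=p$ is exactly the case where the one-shot application of Lemma \ref{locals} with $\omega(h,r)=|A_{h,r}|$ breaks down --- a point you correctly identify, and which you correctly repair by starting the iteration at a level above $\sup_n\|w_n(0)^+\|_{L^\infty}$ (handling $t=0$) and feeding in the $L^2_{\rm loc}$ bound (e.g.\ applying Lemma \ref{locals} to $\omega(h,r)=\iint_{A_{h,r}}G_h(w_n)^2$). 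Your structure computation $b_n(t,x,\eta)\cdot\eta\ge\alpha|\eta|^2$, $|b_n(t,x,\eta)|\le\beta|\eta|$ is right, and the time-derivative manipulations are at the same (formal but standard) level of rigor as the paper's. What the paper's choice of $\theta$ buys is brevity: no interior energy estimate and no iteration are needed, positivity propagating directly from the initial datum; what your route buys is a more self-contained and quantitative argument that reuses the tools of Section \ref{3} and makes transparent, via the surplus term, why $B<\alpha$ is the natural threshold, in agreement with Proposition \ref{none}.
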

\begin{proof}[{\sl Proof}.]
We multiply the equation in \eqref{pron} by $u_n^{-\theta}\,\psi$, where 
$$
\theta = \frac{B}\alpha \in (0,1)
$$
and $\psi(t,x)\in L^p(0,T;W^{1,p}_0(\Om))\cap L^\infty(Q)$ is a nonnegative function which is zero in a neighborhood of $(0,T)\times \partial\Om$.
As in Section \ref{3}, a straightforward calculation shows that $u_{n}$ satisfies
$$
\int_{0}^{T}\langle u_{n}', u_{n}^{-\theta}\psi\rangle + \int_{Q} a(\nabla u_{n})\cdot\nabla \psi\, u_{n}^{-\theta} \geq 0\,.
$$
Then, if we define
$$
 v_n = \left(\frac{u_n}{1-\theta}\right)^{1-\theta}\,,
$$
the previous inequality can be read as
$$
 (1-\theta)^{\theta+2} \int_{0}^{T}\langle v_{n}', \psi\rangle + \int_{Q} \tilde a(t,x,v_n,\nabla v_{n})\cdot\nabla \psi \geq 0\,,
$$
where we have set
$$
 \tilde a(t,x,s,\xi) = s^{-\frac{\theta}{1-\theta}}\,a(t,x, s^{\frac{\theta}{1-\theta}}\,\xi)\,.
$$
In other words, $v_n$ is a weak solution of the variational inequality
$$
\left\{
\begin{array}{cl}
\dys (1-\theta)^{\theta+2}v_n' -{\rm div}\,\tilde a(t,x,v_n,\nabla v_{n}) \geq 0 &\mbox{in $Q$,} \\[1.5 ex]
v_n(0,x)= \left(\frac{u_0+\frac1n}{1-\theta}\right)^{1-\theta} &\mbox{in $\Omega$,} \\[1.5 ex]
v_n(t,x)=\left(\frac{1}{(1-\theta)n}\right)^{1-\theta} &\mbox{on $\Gamma$.} 
\end{array}\right.
$$
Since the vector-valued function $\tilde a$ satisfies the standard inequalities
\begin{gather*}
 \tilde a(t,x,s,\xi)\cdot \xi \geq \al |\xi|^2\,,\\
 |\tilde a(t,x,s,\xi)| \leq \beta |\xi|\,,\\
\big(\tilde a(t,x,s, \xi)- \tilde a(t,x,s, \eta)\big)\cdot (\xi-\eta)>0 \,,
\end{gather*}
by the standard maximum principle (see \cite{LSU}) we easily deduce that $v_n \geq C_{\omega}>0$, in any $\omega\subset\subset\Omega$; this implies the result. 
\end{proof}

Once we have proved Lemma \ref{pos2}, the proof of Theorem \ref{pm2} can be concluded as in Section~\ref{3}, using Proposition \ref{str} and Lemma \ref{strlower}, which continue to hold also for $p = 2$, with the same proof.

\section{The case $p=2$ and $B\geq \alpha$}\label{5}

In this section we try to explain how nonexistence of solutions (in the sense of approximating sequences) may actually occur for problem \rife{pro} with $p=2$ if the absorption parameter $B$ is too large. For the sake of explanation, we will restrict our attention to the model problem
$$
\left\{
\begin{array}{cl}
\dys u'- \Delta u + B\frac{|\D u|^2}{u} = 0  &\mbox{in $Q$,} \\[1.5 ex]
u(0,x)= u_0 (x) &\mbox{in $\Omega$,} \\[1.5 ex]
u(t,x)=0 &\mbox{on $\Gamma$,} 
\end{array}\right.
$$
for $B\geq 1$. An easy rescaling argument will then allow to deal with the case $B \geq \alpha$ for
$$
u' - \alpha\Delta u + B\frac{|\D u|^{2}}{u} = 0\,.
$$

In order to provide a simpler proof, without loss of generality, we will consider a constant initial datum (e.g., $u_{0}=1$); the case of a general initial datum can be obtained in a straightforward way by comparison (recall that $u_{0}\geq c>0$) and elementary changes in the proof. 

Our result is the following.
\begin{proposition}\label{none}\sl
Let $B \geq 1$, and let $u_{n}$ be a solution of problem 
\begin{equation}\label{exe}
\left\{
\begin{array}{cl}
\dys u_{n}'- \Delta u_n + B\frac{|\D u_n|^2}{u_n} = 0  &\mbox{in $Q$,} \\[1.5 ex]
u_n(0,x)= 1 &\mbox{in $\Omega$,} \\[1.5 ex]
u_n(t,x)=\frac1n &\mbox{on $\Gamma$.} 
\end{array}\right.
\end{equation}
Then 
$$
u_n\longrightarrow 0\ \ \text{uniformly on compact subsets of $Q$}\,.
$$
\end{proposition}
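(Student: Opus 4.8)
The strategy is to exploit the Cole--Hopf type substitution already used in the proof of Lemma~\ref{pos}, which for $p=2$ and $B\geq 1$ turns the (super)solution estimate into an estimate for a \emph{subsolution} of the heat equation that blows up on $\Gamma$. Concretely, I would set $w_n = -\log u_n$ (or, to be safe about the constant $B$, $w_n = \tfrac{1}{B}(u_n^{-B}-1)$ or $w_n = u_n^{1-B}$ according to whether $B=1$ or $B>1$), so that a computation analogous to the one leading to \eqref{wn3} shows that $w_n$ is a subsolution of $w_n' - \Delta w_n \leq 0$ in $Q$ (here the sign condition $B\geq 1$ is exactly what makes the quadratic gradient term have the right sign to be dropped), with $w_n(0,x)=0$ in $\Omega$ and $w_n = +\infty$-like boundary values on $\Gamma$ that \emph{increase to $+\infty$} as $n\to\infty$. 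So $u_n\to 0$ on compact subsets of $Q$ is equivalent to $w_n\to +\infty$ there.

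\smallskip

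The key step is then a lower bound for $w_n$ by comparison with an explicit barrier. Fix a compact set $K\subset\subset\Omega$ and let $M>0$ be arbitrary. I would construct, for each $n$ large, a subsolution $\underline w$ of the heat equation in $Q$ with $\underline w\leq w_n$ on the parabolic boundary and $\underline w\geq M$ on $(t_0,T)\times K$ for some $t_0<T$; by the comparison principle for the heat equation (applied to $w_n - \underline w$, using that $w_n$ is a supersolution of $w_n'-\Delta w_n\geq 0$ after the substitution --- note the substitution produces an \emph{equation}, so $w_n$ is simultaneously a sub- and supersolution in the classical sense once $u_n$ is smooth and bounded away from zero, which it is for fixed $n$) this forces $w_n\geq M$ on $(t_0,T)\times K$. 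The natural barrier is built from the first eigenfunction: let $\phi_1>0$ solve $-\Delta\phi_1=\lambda_1\phi_1$ in $\Omega$, $\phi_1=0$ on $\partial\Omega$, normalized so that $\phi_1\leq 1$; then for a cutoff-type lower barrier one can instead simply take $\underline w(t,x) = A_n(1 - e^{-\lambda_1 t}\,\psi(x))$ where $\psi$ is a fixed smooth function with $0\leq\psi\leq 1$, $\psi\equiv 1$ near $\partial\Omega$ and $\psi$ small on $K$, and $A_n\to\infty$ chosen to match the boundary blow-up; the point is that $\underline w$ stays large on $(t_0,T)\times K$ while respecting the boundary data. Letting $n\to\infty$ and then $M\to\infty$ yields $w_n\to+\infty$, hence $u_n\to 0$, uniformly on $K$.

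\smallskip

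The main obstacle I anticipate is making the comparison rigorous: the transformed function $w_n$ solves a linear parabolic equation only formally, and one must check that $u_n$ (for fixed $n$) is smooth enough and strictly positive enough --- which follows from Lemma~\ref{pos2} with $p=2$ giving $u_n\geq c_\omega>0$ locally, together with parabolic regularity for \eqref{exe} away from $\Gamma$ --- so that the substitution is licit and the classical maximum principle of \cite{LSU} applies. A secondary technical point is handling the boundary values: $w_n$ does not literally equal $+\infty$ on $\Gamma$ but equals $w_n = -\log(1/n) = \log n$ (resp.\ $\sim n^{B-1}$), which tends to $\infty$; one must choose the barrier's amplitude $A_n$ to be dominated by this boundary value for all large $n$ while still exceeding $M$ in the interior, which is possible precisely because $\log n\to\infty$. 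Once these points are settled, the uniform-on-compacts statement follows since $K$ and $M$ were arbitrary.
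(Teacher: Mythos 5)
Your first step is exactly the paper's: for $B=1$ set $v_n=-\log u_n$, and for $B>1$ set $v_n=u_n^{1-B}/(B-1)$, and check that $v_n$ solves the heat equation \emph{exactly} (not just an inequality), with bounded initial datum and lateral datum $\log n$ (resp.\ $n^{B-1}/(B-1)$) tending to $+\infty$. Where you diverge is the conclusion. The paper needs no constructed barrier: it observes that $v_n$ is an explicit affine function of one fixed caloric function, namely $v_n=(1-w)\log n$ (resp.\ $v_n=\frac1{B-1}+\frac{n^{B-1}-1}{B-1}(1-w)$), where $w$ solves $w'=\Delta w$ with $w(0,\cdot)=1$, $w=0$ on $\Gamma$; the strong maximum principle gives $0<w<1$ in $Q$, hence $1-w\geq\delta>0$ on every compact subset of $Q$, and $v_n\to+\infty$ uniformly there.

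Your comparison step, as written, has a genuine flaw. With $\underline w(t,x)=A_n\bigl(1-e^{-\lambda_1 t}\psi(x)\bigr)$, $0\leq\psi\leq1$ and $\psi$ small on $K$, you get $\underline w(0,x)=A_n(1-\psi(x))\approx A_n>0$ on $K$, whereas $w_n(0,\cdot)=0$: the barrier sits \emph{above} the solution at the initial time, so the comparison principle cannot be invoked on the parabolic boundary. Moreover, for a generic cutoff $\psi$ that is $\equiv 1$ near $\partial\Omega$ and small on $K$, the quantity $\underline w'-\Delta\underline w=A_ne^{-\lambda_1 t}(\lambda_1\psi+\Delta\psi)$ has no sign, so $\underline w$ need not be a subsolution. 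Any admissible lower barrier must vanish at $t=0$ and equal at most $\log n$ on $\Gamma$, and the natural (indeed, the paper's) choice is $A_n(1-w)$ with $w$ as above; positivity on $K\times[t_0,T]$ then comes from the strong maximum principle rather than from an explicit eigenfunction profile. This also explains why the statement only claims convergence on compact subsets of the \emph{open} cylinder $Q$: near $t=0$ one has $u_n\equiv1$, so no barrier bounded away from zero at $t=0$ can exist. Your remarks on regularity and on choosing $A_n\leq\log n$ are fine; once the barrier is corrected, the argument closes and coincides with the paper's.
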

\begin{proof}[{\sl Proof}.]
We first analyze the case $B=1$, readapting an idea of \cite{mp}. 
Let $u_{n}$ be a solution to \rife{exe} and define $v_{n}=-\log (u_{n})$. Then $v_{n}$ solves 
$$
\left\{
\begin{array}{cl}
\dys v_n'- \Delta_{} v_n = 0  &\mbox{in $Q$,} \\[1.5 ex]
v_n(0,x)= 0 &\mbox{in $\Omega$,} \\[1.5 ex]
v_n(t,x)=\log(n) &\mbox{on $\Gamma$.} 
\end{array}\right.
$$
On the other hand it is clear that $v_{n}=(1-w)\log(n)$, where $w$ is the unique solution to problem 
$$
\left\{
\begin{array}{cl}
\dys w'= \Delta_{} w  &\mbox{in $Q$,} \\[1.5 ex]
w(0,x)= 1 &\mbox{in $\Omega$,} \\[1.5 ex]
w(t,x)=0 &\mbox{on $\Gamma$.} 
\end{array}\right.
$$
By the strong maximum principle we have 
$$
0<w(t,x)<1\ \ \text{for any $(t,x)$ in $Q$}, 
$$
so that $v_{n}\longrightarrow +\infty$ uniformly on compact subsets of $Q$. Hence,
$$
u_{n}={\rm e}^{-v_{n}}\longrightarrow 0 \ \ \text{uniformly on compact subsets of $Q$,}
$$
as desired.

If $B>1$, a similar argument applies; here the right change of variable is
$$
v_{n}=-\frac{u_{n}^{1-B}}{1-B}\,, 
$$
which transforms problem \rife{exe} into
$$
\left\{
\begin{array}{cl}
\dys v_n'- \Delta_{} v_n = 0  &\mbox{in $Q$,} \\[1.5 ex]
v_n(0,x)= \frac{1}{B-1} &\mbox{in $\Omega$,} \\[1.5 ex]
v_n(t,x)= \frac{n^{B-1}}{B-1} &\mbox{on $\Gamma$.} 
\end{array}\right.
$$
The conclusion then follows as before, since $v_{n}$ locally uniformly on compact subsets of $Q$, and $u_{n}=[(B-1)v_{n}]^{-\frac{1}{B-1}}$.

\end{proof}

\medskip
\section{The case $p<2$. Some partial results.}\label{6}

In this last section we will briefly discuss the singular case $p<2$. We will show that no solutions in the sense of Definition \ref{defin} can be expected as finite time extinction may occur. Moreover, we will also show some explicit nondegenerate examples of evolution. 
\subsection{Extinction in finite time.}
Consider
\begin{equation}\label{proETF}
\left\{
\begin{array}{cl}
\dys u'- \Delta_{p} u + B\frac{|\D u|^p}{u} = 0  &\mbox{in $Q$,} \\[1.5 ex]
u(0,x)= u_0 (x) &\mbox{in $\Omega$,} \\[1.5 ex]
u(t,x)=0 &\mbox{on $\Gamma$,} 
\end{array}\right.
\end{equation}
 where $T>0$, $N\geq 2$. Concerning the initial datum $u_{0}$ we assume that $u_{0}$ is a nonnegative bounded function such that $u_{0}\geq c>0$ almost everywhere in $\Omega$.

We have the following result that should be compared with finite time extinction property for the, formally equivalent, doubly nonlinear case (see for instance \cite{wz} and references therein). 

\begin{lemma}\sl
Let $p<2$, $B<p-1$, $u_{0}$ be as above, and let $u$ be a solution of \eqref{proETF}; then there exists $T^{*} > 0$ such that $u(T^{*},x) \equiv 0$.
\end{lemma}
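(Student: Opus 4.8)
The plan is to eliminate the singular first order term by the power substitution $w=u^{1-B}$, which turns the equation in \eqref{proETF} into a doubly nonlinear equation of fast-diffusion type; for such equations finite time extinction is classical (see e.g. \cite{wz}). Both the substitution and the extinction mechanism work precisely because $0<B<p-1$ and $p<2$.

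\emph{The change of unknown.} Since $0<B<p-1<1$, the exponent $1-B$ is positive, so, using that $u$ is positive in the interior of $Q$ by Definition \ref{defin} and that $u=0$ on $\Gamma$, the function $w:=u^{1-B}$ is well defined, vanishes on $\Gamma$, and satisfies $w(0,\cdot)=u_{0}^{1-B}\in L^{\infty}(\Omega)$. Put $\gamma=\frac{p-1-B}{p-1}\in(0,1)$, so that $(\gamma-1)(p-1)=-B$. A direct computation gives the pointwise identity
$$
\Delta_{p}\big(u^{\gamma}\big)=\gamma^{p-1}\,u^{-B}\Big(\Delta_{p}u-B\,\frac{|\nabla u|^{p}}{u}\Big)
$$
(the gradient terms cancel thanks to the choice of $\gamma$). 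Multiplying the equation in \eqref{proETF} by $(1-B)u^{-B}$, and using that $(u^{1-B})'=(1-B)u^{-B}u'$ and $u^{\gamma}=w^{m}$ with $m=\frac{p-1-B}{(1-B)(p-1)}$, one finds that $w$ solves
$$
w'=c\,\Delta_{p}\big(w^{m}\big)\ \text{ in }Q,\qquad w=0\ \text{ on }\Gamma,\qquad w(0,\cdot)=u_{0}^{1-B},
$$
with $c=\frac{1-B}{\gamma^{p-1}}>0$. A short check shows $0<m<1$ and $m(p-1)<1$, both \emph{precisely} because $p<2$; in fact for $p=2$ one gets $m=c=1$, i.e. the heat equation, for which no extinction occurs, consistently with the strict inequality $p<2$ in the statement.

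\emph{Energy inequality and extinction.} Testing the equation for $w$ with $w^{q-1}$, for an exponent $q$ to be chosen large, and using $\nabla(w^{m})=m\,w^{m-1}\nabla w$, one gets
$$
\frac{d}{dt}\int_{\Omega}w(t)^{q}\le -C\int_{\Omega}w^{\,q-2-a}\,|\nabla w|^{p}=-C'\int_{\Omega}\big|\nabla\big(w^{\beta}\big)\big|^{p},\qquad a=(1-m)(p-1)>0,\quad \beta p=q-2-a+p.
$$
By the Sobolev inequality, $\int_{\Omega}|\nabla(w^{\beta})|^{p}\ge C\,\|w\|_{L^{\beta p^{*}}(\Omega)}^{\beta p}$, and, choosing $q$ large enough that $q\le\beta p^{*}$, Hölder's inequality on the bounded domain $\Omega$ gives $\|w\|_{L^{q}(\Omega)}\le C\,\|w\|_{L^{\beta p^{*}}(\Omega)}$. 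Hence $y(t):=\int_{\Omega}w(t)^{q}$ satisfies
$$
y'\le -C\,y^{\,\beta p/q},\qquad\text{with}\quad \frac{\beta p}{q}=1-\frac{2+a-p}{q}\in(0,1)
$$
(here $2+a-p>0$, again because $p<2$). An elementary ODE comparison then yields $y(t)=0$ for every $t\ge T^{*}$, with $T^{*}$ depending only on $\|u_{0}\|_{L^{\infty}(\Omega)}$, $|\Omega|$, $N$, $p$ and $B$. Since $y=\|u\|_{L^{(1-B)q}(\Omega)}^{(1-B)q}$, this means $u(T^{*},\cdot)\equiv0$.

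\emph{Main difficulty.} The delicate point is to justify the substitution $w=u^{1-B}$: since $u$ vanishes on $\Gamma$ and is only \emph{locally} bounded away from zero, it is not obvious that $w^{m}=u^{\gamma}$ lies in $L^{p}(0,T;W^{1,p}_{0}(\Omega))$ up to the boundary, nor that the identity above holds in the weak sense with no boundary loss; the energy computation of the second step also requires the usual Steklov averaging of the time derivative. The natural way around this is to run the whole argument on the strictly positive approximating solutions $u_{n}$ of \eqref{pron}: $w_{n}=u_{n}^{1-B}$ solves the doubly nonlinear equation exactly, with lateral datum $n^{-(1-B)}\downarrow0$, so one compares $w_{n}$ from above with the solution of the doubly nonlinear equation having lateral datum $n^{-(1-B)}$ and the constant initial datum $(\|u_{0}\|_{L^{\infty}(\Omega)}+1)^{1-B}$; the latter solutions decrease to the solution with zero lateral datum, which extinguishes at a time $T^{*}$ given by the previous step. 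Since that $T^{*}$ is uniform in $n$, passing to the limit $n\to\infty$ (using the uniform bounds of Lemma \ref{base} and the a.e. convergence $u_{n}\to u$) gives $\int_{\Omega}u(t)^{(1-B)q}=0$ for $t\ge T^{*}$, i.e. $u(T^{*},\cdot)\equiv0$.
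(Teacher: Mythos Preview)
Your proof is correct and follows essentially the same strategy as the paper: a power substitution turns \eqref{proETF} into a doubly nonlinear equation of fast-diffusion type, and then an $L^{q}$ energy estimate combined with the Sobolev inequality yields a sublinear differential inequality forcing finite time extinction. The only cosmetic differences are the choice of power (you set $w=u^{1-B}$ and get $w'=c\,\Delta_{p}(w^{m})$, whereas the paper sets $w=C\,u^{(p-1-B)/(p-1)}$ and gets $w^{\beta}w'=\Delta_{p}w$) and that you derive the cleaner inequality $y'\le -C\,y^{\theta}$ directly on $y(t)=\int_{\Omega}w(t)^{q}$; your final paragraph justifying the computation via the approximations $u_{n}$ is also a welcome addition that the paper leaves implicit.
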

\begin{proof}[{\sl Proof}.]
Let $$ w= \frac{p-1}{p-1-B}u^{\frac{p-1-B}{p-1}}\,,
$$
so that $w$ solves
$$
\left\{
\begin{array}{cl}
\dys w^{\beta}w'- \Delta_{p} w = 0  &\mbox{in $Q$,} \\[1.5 ex]
w(0,x)= \frac{p-1}{p-1-B}u_{0}(x)^{\frac{p-1-B}{p-1}} &\mbox{in $\Omega$,} \\[1.5 ex]
w(t,x)=0 &\mbox{on $\Gamma$,} 
\end{array}\right.
$$
with $\beta=\frac{(2-p)B}{p-1-B}$. We consider $w^{\gamma}$ (where $\gamma$ will be chosen later) as test function in the previous problem, and we integrate on $Q_{t} = (0,t) \times \Omega$, with $0 < t < T$. We obtain 
$$
\int_{\Omega} w(t)^{\gamma+\beta+1} +\int_{Q_{t}}|\nabla w|^{p} w^{\gamma-1}\leq C,
$$
that implies
$$
\int_{\Omega} w(t)^{\gamma+\beta+1} +\int_{Q_{t}}|\nabla w^{\frac{\gamma+p-1}{p}}|^{p} \leq C.
$$
Then using Sobolev inequality
$$
\int_{\Omega} w(t)^{\gamma+\beta+1} +\left(\int_{Q_{t}} w^{\frac{(\gamma+p-1)p^{\ast}}{p}}\right)^{\frac{p}{p^{\ast}}} \leq C.
$$ 
Now observe that, by H\"older inequality, that can be used as $(\gamma +p-1)\frac{p^{\ast}}{p}>\gamma+\beta+1$ (for $\gamma$ large enough), 
$$
\int_{Q_{t}} w^{\gamma+\beta+1} \leq C \left(\int_{Q_{t}} w^{\frac{(\gamma+p-1)p^{\ast}}{p}}\right)^{\frac{p(\gamma +\beta+1)}{p^{\ast}(\gamma +p-1)}}\,.
$$
Hence, if we define 
$$
\eta(t)=\int_{Q_{t}} w^{\gamma+\beta+1}\,,
$$
we have, for some positive constants $C_{1}$ and $C$, 
$$
\eta'(t)+C_{1}\eta(t)^{\frac{\gamma+p-1}{\gamma+\beta+1}}\leq C\,.
$$
We now remark that $\frac{\gamma+p-1}{\gamma+\beta+1}<1$ due to the assumptions $p < 2$ and $B < p - 1$. Therefore, by standard ODE analysis, there exists $T^{*} > 0$ such that $\eta(T^{*}) = 0$; this implies the finite time extinction for $w$, hence for $u$. 
\end{proof}

\subsection{An example of nontrivial evolution.} As we already said, the analysis in the case $p<2$ is much more delicate. In the previous section we showed that in some cases finite time extinction may occur and so no solutions (in the sense of Definition \ref{defin}) can be found in general. 
In this section we provide an explicit example of solution that shows that, in some cases, a solution exists at least for small times. 

\begin{example}
Let $\frac{2N}{N+2}<p<2$ and let $B$ be such that 
$$
0<B<\frac{(p^{\ast}-2)(p-1)}{p^{\ast}-p}<p-1.
$$
First of all, consider the solution $v$ to problem
\begin{equation}\label{vu}
\left\{
\begin{array}{cl}
\dys -\Delta_{p} v=\gamma^{p-1}v^{\frac{(\gamma-1)(p-1)+1}{\gamma}}  &\mbox{in $\Omega$,} \\[1.5 ex]
v= 0 &\mbox{on $\partial\Omega$,}
\end{array}\right.
\end{equation}
where, 
$$
\gamma=\frac{p-1-B}{p-1}>0.
$$
Problem \rife{vu} admits a nontrivial, positive, variational solution obtained by a standard mountain pass procedure as soon as
$$
p-1 < \frac{(\gamma-1)(p-1)+1}{\gamma}<p^{\ast} -1,
$$ 
and an easy calculation shows that the previous relation is equivalent to
$$
B<\frac{(p^{\ast}-2)(p-1)}{p^{\ast}-p}\,.
$$

Consider now the function $\phi\doteq v^{\frac{1}{\gamma}}$, which solves the problem
$$
\left\{
\begin{array}{cl}
\dys -\Delta_{p} \phi+B\frac{|\nabla \phi|^{p}}{\phi}=\phi &\mbox{in $\Omega$,} \\[1.5 ex]
\phi= 0 &\mbox{on $\partial\Omega$,}
\end{array}\right.
$$
and define 
$$
u(t,x)=(1-(2-p)t)^{\frac{1}{2-p}}\phi(x)\,. 
$$
We can compute the problem solved by $u$, we have

$$
u'- \Delta_{p} u + B\frac{|\D u|^p}{u} 
= (1-(2-p)t)^{\frac{1}{2-p}}\left(-\Delta_{p} \phi+B\frac{|\nabla \phi|^{p}}{\phi}-\phi\right)=0\,,
$$
so that $u$ is a solution of problem
$$
\left\{
\begin{array}{cl}
\dys u'- \Delta_{p} u + B\frac{|\D u|^p}{u} = 0 &\mbox{in $Q$,} \\[1.5 ex]
u(0,x)= \phi (x) &\mbox{in $\Omega$,} \\[1.5 ex]
u(t,x)=0 &\mbox{on $\Gamma$,} 
\end{array}\right.
$$
that has $T^{*}=\frac{1}{2-p} > 0$ as extinction time. 
\end{example}

\end{document}